\spnewtheorem{claimnr}{Claim}{\itshape}{\rmfamily}
\newcommand{\R}{\mathds{R}}
\newcommand{\Rp}{\mathds{R}_{\geq 0}}
\newcommand{\Rpp}{\mathds{R}_{> 0}}
\newcommand{\N}{\mathds{N}}
\newcommand{\deltaout}{\delta^{+}}
\newcommand{\deltain}{\delta^{-}}
\newcommand{\abs}[1]{\left\lvert{#1}\right\rvert}
\newcommand{\define}{\coloneqq}
\newcommand{\set}[1]{\left\{#1\right\}}
\newcommand{\suchthat}{\,:\,}
\DeclareMathOperator{\sign}{sign}
\title{Valid Cuts for the Design of\texorpdfstring{\\}{} Potential-based Flow Networks
}
\author{Pascal Börner\inst{1}\orcidlink{0009-0007-0906-8828} \and Max Klimm\inst{2}\orcidlink{0000-0002-9061-2267} \and Annette Lutz\inst{1}\orcidlink{0009-0008-7699-7018} \and \\ Marc E. Pfetsch\inst{1}\orcidlink{0000-0002-0947-7193}  \and Martin Skutella\inst{2}\orcidlink{0000-0002-9814-1703} \and Lea Strubberg\inst{2}\orcidlink{0009-0009-8505-3614} }
\institute{Department of Mathematics, TU Darmstadt, Germany \\
\email{$\{$boerner,lutz,pfetsch$\}$@mathematik.tu-darmstadt.de}
\and
Institute of Mathematics, TU Berlin, Germany\\
\email{$\{$klimm,skutella,strubberg$\}$@math.tu-berlin.de}\\
}
\begin{document}

    \maketitle

    \begin{abstract}
    The construction of a cost minimal network for flows obeying physical laws is an important problem for the design of electricity, water, hydrogen, and natural gas infrastructures.
    We formulate this problem as
    a mixed-integer non-linear program with potential-based
    flows. The  non-convexity of the constraints stemming from the potential-based flow model together with the binary variables indicating the decision to build a connection make these programs challenging to solve.
    We develop a novel class of valid inequalities on the fractional relaxations of the binary variables.
    Further, we show that this class of inequalities can be separated in polynomial time for solutions to a fractional relaxation.
    This makes it possible to incorporate these inequalities into a branch-and-cut framework.
    The advantage of these inequalities is lastly demonstrated in a computational study on the design of real-world gas transport networks. 
    \end{abstract}

\section{Introduction}
\label{sec:Intro}

The availability of networks for electricity, water, hydrogen, natural gas, and traffic is a main driving force for human and economic development. Yet, the installation and maintenance of these infrastructure networks requires long term efforts and vast financial and environmental resources. Thus, for an economical usage of resources, mathematical optimization methods are very important.

This paper studies the mathematical optimization problem of designing a cost-minimal network that satisfies given demands. This problem---sometimes coined the \emph{network synthesis problem}---has been studied extensively for a classic flow model \cite{alvarez-miranda2014,buchheim2011,gomory1962,gusfield1983,talluri1996}.
In this problem, we are given an underlying multi-graph $G = (V,A)$ of possible network connections.
Every potential arc~$a \in A$ has linear costs for installing capacities and the goal is to minimize costs such that a given set of demands can be satisfied.

While obviously important, these models have shortcomings as a model for the construction of physical infrastructure networks for electricity, water, hydrogen, and natural gas since they are based on a simplistic flow model where flow can be routed arbitrarily along the network as long as flow conservation constraints at the nodes are satisfied. In reality, physical flows such as electricity, water, hydrogen, natural gas, and (to a lesser and not immediate extent) individual traffic cannot be routed at will in the network, but instead is governed by physical principles such as resistances, pipe pressures, and minimal travel times.

To address this issue, we adopt the potential-based flow model introduced by Birkhoff and Diaz~\cite{birkhoff1956} and further discussed by Rockafellar~\cite{rockafellar1984}.
In this model, we are given a directed graph $G = (V,A)$ where each arc is characterized by a \emph{resistance} $\beta_a > 0$ that depends on the physical properties of the connection such as electrical resistances of wires, or the roughness of pipes. A flow is a vector $f \in \R^A$ where, for an arc~$a$, a positive flow value $f_a > 0$ indicates a physical (mass) flow in the direction of the orientation of the arc, and a negative flow $f_a < 0$ indicates a (mass) flow against the direction of the orientation of the arc.
The arc flow is governed by a vertex potential vector $\pi \in \R^V$ in the following way. For each arc $a = (u,v)$, the flow on~$a$ satisfies the equation
\begin{align}
\label{eq:weymouth}
\pi_{u} - \pi_{v} = \beta_a \sign(f_a) \abs{f_a}^r,
\end{align}
where the parameter $r > 0$ is called the \emph{degree} of the potential-based flow network. 
We have $r=1$ for DC power networks, $r = 1.852$ for water networks, and $r = 2$ for hydrogen or natural gas networks \cite{Gross19}.
The potential $\pi_v$ on node~$v$ corresponds to voltages in power networks, hydraulic heads in water networks, or squared pressures in hydrogen or natural gas networks.
Due to safety reasons, it is assumed that the node potentials are within given bounds, i.e., that $\pi \in [0,\bar{\pi}]^V$ for a given global maximal potential value $\bar{\pi} > 0$.

For the cost-minimal construction of potential-based flow networks, we are given a directed multi-graph $G = (V,A)$ that represents possible connections. Parallel arcs can be used to allow for connections between two nodes with different properties like diameters, resistances and costs.
We are further given a balance vector $b \in \R^V$ with the property that $\sum_{v \in V} b_v = 0$. 
Our goal is to select a cost-minimal subset of arcs such that there is a potential-based flow satisfying the given balances. That is, we are interested in solving the following mixed-integer non-linear program (MINLP):
\begin{subequations}
\label{problem}
\begin{alignat}{4}
    \min        & \quad&&\sum_{a\in A} c_a \,x_a  &\quad & \\
    \text{s.t.} &      &&\beta_a \sign\left({f_a} \right)\abs{f_a}^r = x_a\, (\pi_u-\pi_v) &\quad & \text{for all } a=(u,v)\in A, \label{problem:weymouth}\\
                &      && \sum_{a\in \deltaout (v)} f_a -   \sum_{a\in \deltain (v)} f_a =b_v  &\quad & \text{for all } v\in V, \label{problem:flow}\\
                &      && f\in  \R^A,\; x\in \set{0,1}^A, & \quad &\\
                &      && \pi \in [0,\bar{\pi}]^V .\label{problem:pressure}
\end{alignat}
\end{subequations}

\paragraph{\bf Our Results.}

The MINLP~\eqref{problem} involves binary decision variables $x \in \{0,1\}^A$ that govern which arcs are constructed, and also features non-linear constraints \eqref{problem:weymouth}. This makes the problem difficult to solve in practice.
One key challenge when solving \eqref{problem} in a branch-and-cut framework is to obtain good linear inequalities.

Our main result is \Cref{thm:main_inequality}, which yields a family of inequalities that are valid for all feasible solutions $(\pi, f, x)$ to \eqref{problem}. 
These inequalities depend on the global maximum pressure bound $\bar{\pi}$ in the network and take the resistances and the degree of the network into account.
We further show that these inequalities can be separated in polynomial time, i.e., for a point $(\pi,f,x)$, we can decide in polynomial time whether all exponentially many inequalities of this type are satisfied, or we can construct one of the inequalities from the family that is violated by $(\pi,f,x)$. 
For the proof of the result, we reduce the problem of finding one of the violated inequalities to the problem of finding disjoint cuts in the network with a suitably chosen total cut value. 
Further using the submodularity of the cut function, we can optimize over this set in polynomial time by using submodular function minimization routines.

Finally, we employ these inequalities and their efficient separation in a spatial branch-and-cut framework.
Our computational results show that the inclusion of these inequalities in the branch-and-cut  process improves the overall computation time.

Some of the previous literature (e.g., Thürauf et al.~\cite{ThueraufGruebelSchmidt24}) deal with individual pressure bounds 
\begin{align}
\label{eq:individual-bounds}
\tag{2e$'$}
\pi_v\in [\bar \pi_v, \underline{\pi}_v] \quad \text{for all $v \in V$}  
\end{align}
instead of the global pressure bound \eqref{problem:pressure}. Setting $\bar \pi \coloneqq \max_{v\in V} \bar \pi_v - \min_{v\in V}  \underline{\pi}_v$, it is straightforward to verify that the MINLP with global pressure bounds $[0,\bar{\pi}]$ is a relaxation of the MINLP with the individual pressure bounds~\eqref{eq:individual-bounds}. Thus, all valid inequalities on $(\pi,f,x)$ for the relaxation remain valid for the restriction.

\paragraph{\bf Related Work.}

Topology optimization has been independently considered for the different applications of potential-based flows. Moreover, for physical networks like gas and water networks, there are mainly two different approaches:

The first approach and also the approach followed in this paper is to construct arcs with minimum cost from a given set of possible connections. This problem has been studied for gas networks~\cite{BorrazSanchezBentBackhausHijaziHentenryck2016,HumFK16,Hum14,HumS17,LiDeySahinidis24,SchweigerLiers2018}, water networks \cite{BragalliDAmbrosioLeeLodiToth12,LiDeySahinidisSusarlaZamarripaDrouven25}, electrical flows~\cite{BarrosAHCPT23} and general potential-based flow networks~\cite{ThueraufGruebelSchmidt24}

The $\mathsf{NP}$-hardness of Problem~\eqref{problem} has been shown explicitly in~\cite{Hum14}  for the degree of $r = 2$; for electrical networks (with a degree of $r = 1$) it follows from the $\mathsf{NP}$-hardness result of the switching problem for DC-networks shown in \cite{LehGhH14}.

The second approach is to determine optimal capacities of the installed arcs corresponding, e.g., to diameters of pipes~\cite{AndreBonnansCornibert2009,AndB11,BobonneauNesterovVial2012,HansenMadsenNielsen1991,WolS96}. 

\section{Preliminaries}
\label{sec:Prelim}

Let $G=(V,A)$ be a directed and weakly connected graph with possibly parallel arcs but without loops.
To clarify notation, we define $\delta^+(v)\subseteq A$ as the set of arcs leaving node $v\in V$, and  $\delta^-(v)\subseteq A$ as the set of arcs entering $v$.
For a set $S\subseteq V$, we denote by $\delta (S) \subseteq A$ the set of arcs that connect a node $u\in S$ with a node $v\notin S$ in either direction.

A \emph{flow} in $G$ is a vector $f \in \mathbb R^A$, where negative flow over an arc indicates a positive flow in the opposite direction.
Let $B\coloneqq \{b\in \R^V \suchthat \sum_{v\in V} b_v = 0\}$ be the set of \emph{balanced vectors}.
We call a flow $f\in \R^A$ a $b$-\emph{transhipment} or the \emph{flow induced by} $b$ with $b\in B$ if it satisfies the flow conservation constraints~\eqref{problem:flow}.
For $s$, $t\in V$ with $s\neq t$, a $b$-transhipment is an $(s,t)$-\emph{flow of value} $d\in \R$ if and only if $b=d\cdot \chi_{s,t}$.
Here, we define the vector $\chi_{s,t} \coloneqq \chi_s-\chi_t \in B $, where $\chi_v$ is the standard unit vector in~$\R^V$  corresponding to component $v$.

We call the tuple $\mathcal{N}=(G,\beta, r)$ with resistances $\beta \in \R^A_{>0}$ and degree $r>0$  a \emph{potential-based flow network}, or simply \emph{network}.
The \emph{conductance} $\mu\in \Rpp^A$ is defined by $\mu_a = 1/\sqrt[r]{\beta_a}$ for $a\in A.$
A \emph{potential-based flow} in a network $\mathcal{N}$ is a pair $(\pi,f)$ of a potential vector $\pi \in \R^V$ and flow vector $f$ for which equation~\eqref{eq:weymouth} holds.

It is an easy observation taht the potential vector is shift-invariant, i.e., for a potential-based flow $(\pi,f)$, the shifted potentials $\pi + \lambda\,\mathbf{1}$ also fulfill~\eqref{eq:weymouth} for all $\lambda \in \R$, where $\mathbf{1}\in \R^V$ is the all ones vector.
This, we will hencefirth assume without loss of generality that all potential-based flows $(\pi,f)$ have their smallest node potential normalized to $0$.
We call a potential-based flow $(\pi,f)$ in a network $\mathcal N$ a \emph{potential-based $b$-transhipment} or a \emph{potential-based $(s,t)$-flow} if $f$ is  a $b$-transhipment or an $(s,t)$-flow in $G$, respectively.

Birkhoff and Diaz \cite{birkhoff1956} showed that for every $b\in B$ there exists a unique potential-based $b$-transhipment $(\pi,f)$. 
This establishes the existence of a function $\smash{\pi^{\mathcal{ N}}\colon B \to \R^V}$ that maps a balanced vector $b \in B$ to potentials $\smash{\pi^{\mathcal N}(b)\in \R^V}$, with a smallest node potential of $0$, and a function  $f^{\mathcal N}\colon B \to \R^A$, mapping $b\in B$ to the flow $f^{\mathcal{N}}(b) \in \R^A$, so that the pair $(\pi^{\mathcal N}(b),f^{\mathcal N}(b))$ fulfills \eqref{problem:flow} and~\eqref{eq:weymouth}.
Note that $\pi^\mathcal N(\lambda b) = \lambda^r  \pi^\mathcal N(b)$ for all $\lambda \in \R$; see also \cite[Observation 3.1]{Gross19}.
In the context of infrastructure networks, the \emph{entry} nodes $\smash{T^+\subseteq V}$ and \emph{exit} nodes $\smash{T^-\subseteq V}$, with $\smash{T^+\cap T^-=\emptyset}$, are typically fixed.
We refer to their union $\smash{T\coloneqq T^+\cup T^-}$ collectively as \emph{terminals}. As a demand on these terminals we consider only those that are balanced.
Therefore, let 
\[
B(T)\define \{b\in B \suchthat 
b_v \geq 0\; \forall\, v \in T^+,\; b_v\leq 0\; \forall\, v \in T^-,\; b_v = 0\; \forall\, v \notin T\}.
\]
For $X\subseteq T$ and $b\in B(T)$, we define $b(X)\coloneqq \sum_{v\in X} b_v$.


\subsection{Effective Resistance and Conductance}

For a potential-based flow network and two designated nodes $s$ and $t$, the \emph{effective resistance}~$R^\mathcal N_{s,t}$ is the potential difference required to send one unit of flow from $s$ to $t$, i.e., $$\smash{R^\mathcal N_{s,t} \define \pi_s^\mathcal N (\chi_{s,t})-  \pi_t^\mathcal N(\chi_{s,t})}.$$
We define the \emph{effective conductance} between $s$ and $t$ as
\(U^\mathcal N_{s,t} = (R^\mathcal N_{s,t} )^{-1/r}\).
Two potential-based flow networks $\mathcal N$  on $G=(V,A)$ and $\mathcal N'$ on $G'=(V',A')$ of degree $r$ are called \emph{equivalent} with respect to a set of terminal nodes $T\subseteq V\cap V'$ if $\pi_v^\mathcal N(b) = \pi_v^{\mathcal N'}(b)$ for every $v\in T$ and $b\in B(T)$. The following result shows that every two-terminal network is equivalent to a network consisting of a single arc.

 \begin{theorem}[\cite{Gross19,Reduction23}]
    \label{thm:reduction}
    A network $\mathcal N=(G,\beta,r)$ with terminal nodes $T=\{s,t\}\subseteq V$ is equivalent to the network $((T,\{(s,t)\}),\beta',r)$ with
    $\beta'_{(s,t)} = R^\mathcal{N}_{s,t}>0$.
\end{theorem}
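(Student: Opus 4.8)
The plan is to verify the definition of equivalence directly: the single-arc network $\mathcal N'=((\{s,t\},\{(s,t)\}),\beta',r)$ is equivalent to $\mathcal N$ with respect to $T=\{s,t\}$ precisely if $\pi^{\mathcal N}_v(b)=\pi^{\mathcal N'}_v(b)$ for $v\in\{s,t\}$ and all $b\in B(T)$. Since a balanced vector supported on $\{s,t\}$ has the form $b=d\,\chi_{s,t}$ with $d\in\R$, it suffices to compare the two networks on this one-parameter family. By the reversal symmetry $(\pi,f)\mapsto(-\pi,-f)$ (which maps a potential-based $(s,t)$-flow to a potential-based $(t,s)$-flow and, after re-normalizing the minimum potential to $0$, also yields $R^{\mathcal N}_{t,s}=R^{\mathcal N}_{s,t}$), I may restrict attention to $d\ge 0$. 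On the single-arc side, flow conservation forces $f'_{(s,t)}=d$, and \eqref{eq:weymouth} together with the normalization give $\pi^{\mathcal N'}_t(d\chi_{s,t})=0$ and $\pi^{\mathcal N'}_s(d\chi_{s,t})=\beta'\,d^r=R^{\mathcal N}_{s,t}\,d^r$ (using $R^{\mathcal N}_{s,t}>0$, established below).

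The key ingredient for the $\mathcal N$ side is a monotonicity statement: in the unique potential-based $(s,t)$-flow $(\pi,f)=(\pi^{\mathcal N}(\chi_{s,t}),f^{\mathcal N}(\chi_{s,t}))$ one has $\pi_s=\max_{v\in V}\pi_v$, $\pi_t=\min_{v\in V}\pi_v=0$, and $\pi_s>\pi_t$. I would prove this with a level-set argument. For $v\in\argmax_{w\in V}\pi_w$, \eqref{eq:weymouth} forces every arc leaving $v$ to carry nonnegative flow and every arc entering $v$ to carry nonpositive flow, so the net flow out of $v$ is nonnegative, and hence $b_v\ge 0$ by \eqref{problem:flow}; this rules out $v=t$, and for $v\ne s$ (where $b_v=0$) it forces all flows incident to $v$ to vanish, which puts every neighbor of $v$ into $\argmax_{w\in V}\pi_w$ as well. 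As $G$ is weakly connected and $t\notin\argmax_{w\in V}\pi_w$, this forces $s\in\argmax_{w\in V}\pi_w$; the mirror argument applied to $\argmin_{w\in V}\pi_w$ gives $t\in\argmin_{w\in V}\pi_w$. If $\pi_s=\pi_t$, then $\pi$ is constant, hence $f\equiv 0$ by \eqref{eq:weymouth}, contradicting $b=\chi_{s,t}\ne 0$ since $s\ne t$; thus $\pi_s>\pi_t$, and with the normalization $\min_{v\in V}\pi_v=0$ we get $\pi_t=0$ and $\pi_s=\pi_s-\pi_t=R^{\mathcal N}_{s,t}>0$, which also establishes $\beta'_{(s,t)}=R^{\mathcal N}_{s,t}>0$. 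I expect this monotonicity lemma to be the main obstacle; everything around it is bookkeeping, and it is presumably implicit in \cite{Gross19,Reduction23}.

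Finally, by the homogeneity $\pi^{\mathcal N}(\lambda b)=\lambda^r\pi^{\mathcal N}(b)$ for $\lambda\ge 0$, we obtain $\pi^{\mathcal N}_t(d\chi_{s,t})=d^r\pi_t=0$ and $\pi^{\mathcal N}_s(d\chi_{s,t})=d^r\pi_s=R^{\mathcal N}_{s,t}\,d^r$ for every $d\ge 0$, which coincides with the values computed for $\mathcal N'$. Hence $\pi^{\mathcal N}_v(b)=\pi^{\mathcal N'}_v(b)$ for $v\in\{s,t\}$ and all $b\in B(T)$, i.e., the two networks are equivalent; and $\mathcal N'$ is a legitimate network because $G'$ is weakly connected and loop-free, $r>0$, and $\beta'_{(s,t)}=R^{\mathcal N}_{s,t}>0$.
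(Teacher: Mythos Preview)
The paper does not prove \Cref{thm:reduction}; it is quoted from \cite{Gross19,Reduction23} without proof. So there is no ``paper's own proof'' to compare against, and your proposal should be judged on its own merits.

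Your argument is correct. You verify the definition of equivalence directly on the one-parameter family $b=d\,\chi_{s,t}$, compute the potentials in the single-arc network explicitly, and match them to those in $\mathcal N$ via the monotonicity statement $\pi_s=\max_v\pi_v$, $\pi_t=\min_v\pi_v$, $\pi_s>\pi_t$. The level-set argument you give for this monotonicity is sound; note that this is precisely the content of \Cref{lem:s-t-flows}, which the paper also states (and cites from \cite{Reduction23}) rather than proves. So in effect you have written out a self-contained proof of both \Cref{thm:reduction} and \Cref{lem:s-t-flows} together, whereas the paper imports both from the literature. Your handling of the $d<0$ case via the reversal symmetry $(\pi,f)\mapsto(-\pi,-f)$ and the resulting identity $R^{\mathcal N}_{t,s}=R^{\mathcal N}_{s,t}$ is also fine; alternatively, under the paper's convention $B(T)=\{b\in B:b_v\ge 0\text{ on }T^+,\,b_v\le 0\text{ on }T^-\}$ with $T^+=\{s\}$ and $T^-=\{t\}$, only $d\ge 0$ arises in the first place, so this step is not strictly needed.
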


The following lemma states another useful property of two-terminal networks.

\begin{lemma}[{\cite[Lemma 9]{Reduction23}}]
    \label{lem:s-t-flows}
    Let $s$, $t\in V$ be two distinct nodes in a network~$\mathcal{N}$ and $d>0$.
    Let $\pi \define \pi^\mathcal N(d\cdot \chi_{s,t})\in \R^V$ be the induced potentials of the potential-based  $(s,t)$-flow  of value $d$ in $\mathcal N$.
    Then, $\pi_s \geq \pi_v \geq \pi_t$  for all $v\in V$.
    Furthermore, $\pi_s-\pi_t = d^r \cdot R_{s,t}^\mathcal N$.
\end{lemma}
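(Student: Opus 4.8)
\medskip
\noindent\textbf{Proof proposal.}
The second claim, $\pi_s - \pi_t = d^r \cdot R^{\mathcal N}_{s,t}$, should follow immediately from the homogeneity of the potential map recorded in the preliminaries, namely $\pi^{\mathcal N}(\lambda b) = \lambda^r \pi^{\mathcal N}(b)$. Applying this with $b = \chi_{s,t}$ and $\lambda = d$ gives $\pi = \pi^{\mathcal N}(d\cdot\chi_{s,t}) = d^r\,\pi^{\mathcal N}(\chi_{s,t})$, so that $\pi_s - \pi_t = d^r\bigl(\pi^{\mathcal N}_s(\chi_{s,t}) - \pi^{\mathcal N}_t(\chi_{s,t})\bigr) = d^r\,R^{\mathcal N}_{s,t}$ by the definition of the effective resistance. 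No further work is needed here.

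For the ordering $\pi_s \ge \pi_v \ge \pi_t$, the plan is to prove a discrete maximum principle for the flow $f \define f^{\mathcal N}(d\cdot\chi_{s,t})$ induced by $b = d\cdot\chi_{s,t}$, whose only positive balance sits at $s$ and whose only negative balance sits at $t$. Set $M \define \max_{u\in V}\pi_u$ and $S \define \{w\in V : \pi_w = M\}$. First I would observe that $S \neq V$: if all potentials were equal, then \eqref{eq:weymouth} would force $f \equiv 0$, contradicting $b_s = d > 0$. Since $G$ is weakly connected and $\emptyset \neq S \subsetneq V$, the cut $\delta(S)$ is non-empty. For every arc in $\delta(S)$ the potential drop between its endpoint in $S$ and its endpoint outside $S$ is strictly positive, so by \eqref{eq:weymouth} its flow is oriented out of $S$ (with $f_a > 0$ if the arc leaves $S$ according to its orientation, and $f_a < 0$ if it enters $S$ according to its orientation) and is nonzero. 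Summing the flow-conservation constraints~\eqref{problem:flow} over $v\in S$ cancels all arcs internal to $S$ and shows that the strictly positive total flow leaving $S$ across $\delta(S)$ equals $\sum_{v\in S} b_v$. If $s\notin S$, then $\sum_{v\in S} b_v \le 0$ because every balance in $S$ is non-positive, a contradiction; hence $s\in S$, i.e., $\pi_s = M$, which is the claimed upper bound.

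The lower bound $\pi_v \ge \pi_t$ is obtained by the symmetric argument applied to $m \define \min_{u\in V}\pi_u$ and $S' \define \{w : \pi_w = m\}$: every arc of $\delta(S')$ carries flow into $S'$, so the total flow leaving $S'$ is strictly negative and equals $\sum_{v\in S'} b_v$, which is non-negative unless $t\in S'$; hence $\pi_t = m$, and combining the two bounds finishes the proof.

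I expect the only delicate points to be bookkeeping rather than conceptual: correctly translating ``the potential drops toward the complement of $S$'' into the two sign cases for $f_a$ depending on whether an arc of $\delta(S)$ agrees or disagrees with the orientation from $S$ to $V\setminus S$, and making sure the degenerate case $S = V$ (equivalently, the zero flow, which is excluded since $d>0$) is ruled out before invoking weak connectivity. Everything else is a direct consequence of flow conservation and the strict monotonicity of $t \mapsto \sign(t)\abs{t}^r$.
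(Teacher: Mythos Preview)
Your argument is correct on both counts. The homogeneity identity $\pi^{\mathcal N}(d\,\chi_{s,t})=d^r\,\pi^{\mathcal N}(\chi_{s,t})$ (valid for $d>0$, and compatible with the normalisation of the smallest potential to~$0$) immediately yields $\pi_s-\pi_t=d^r R^{\mathcal N}_{s,t}$; and the discrete maximum/minimum principle you outline is the standard way to establish $\pi_s\ge\pi_v\ge\pi_t$: the sign of the flow across $\delta(S)$ forces $\sum_{v\in S}b_v>0$, which is impossible unless $s\in S$, and symmetrically for $t$.

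There is nothing to compare against in the paper itself: \Cref{lem:s-t-flows} is quoted verbatim from \cite[Lemma~9]{Reduction23} and the present paper gives no proof. The argument in \cite{Reduction23} is exactly the maximum-principle reasoning you sketch, so your proposal matches both in approach and in level of detail. The bookkeeping concerns you flag (the two orientation cases for arcs in $\delta(S)$, and ruling out $S=V$ via $d>0$) are the only places where one has to be careful, and you have handled them correctly.
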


The following results show how effective resistance and conductance behave under different network operations.

\begin{proposition}[\cite{Reduction23,Raber22}]
\label{prop:contracting_deleting_arc}
    Let  $\mathcal N=(G,\beta,r)$ be a potential-based flow network and let $s$, $t \in  V$. Then, the following hold:
    \begin{enumerate}[(i)]
        \item \label{it:prop-deleting} If $\mathcal N'$ is a network resulting from $\mathcal N$ by
    either increasing the resistance of an arbitrary arc or by
    deleting an arc, then 
    \[R_{s,t}^{\mathcal N'} \geq R_{s,t}^{\mathcal N} \quad \text{ and } \quad U_{s,t}^{\mathcal N'} \leq U_{s,t}^{\mathcal N}. \]
    \item \label{it:prop-contracting} If $\mathcal N''$ is a network resulting from $\mathcal N$ by either decreasing the resistance of an arbitrary arc or by contracting two nodes, then
    \[R_{s,t}^{\mathcal N''} \leq R_{s,t}^{\mathcal N} \quad \text{ and } \quad U_{s,t}^{\mathcal N''} \geq U_{s,t}^{\mathcal N}. \]
    \end{enumerate}   
\end{proposition}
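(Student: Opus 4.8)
\medskip
\noindent\textbf{Proof proposal.}
The plan is to derive all four monotonicity statements from a single variational (Thomson-type) characterization of the effective resistance, after which each claim becomes monotonicity of a minimum under a shrinking or growing feasible set, or under a monotone objective. For a network $\mathcal N=(G,\beta,r)$ I would introduce the \emph{content} of a flow $f\in\R^A$,
\[
C_{\mathcal N}(f)\define\sum_{a\in A}\frac{\beta_a}{r+1}\,\abs{f_a}^{r+1},
\]
which, because $r>0$, is strictly convex, coercive, and continuously differentiable; hence it attains a unique minimum over the (nonempty, since $G$ is weakly connected) affine set of $b$-transhipments for every $b\in B$. The first step is to identify this minimizer with the potential-based $b$-transhipment $f^{\mathcal N}(b)$: the cycle space is the kernel of the node--arc incidence matrix $M$, its orthogonal complement is $\set{(\pi_u-\pi_v)_{a=(u,v)}\suchthat\pi\in\R^V}=\mathrm{range}(M^{\!\top})$, and the first-order optimality condition for the minimizer $f^\ast$ over $\set{f\suchthat Mf=b}$ states precisely that $\nabla C_{\mathcal N}(f^\ast)$ — whose arc component equals $\beta_a\sign(f^\ast_a)\abs{f^\ast_a}^r$ — lies in this complement, i.e.\ satisfies \eqref{eq:weymouth}; uniqueness of the potential-based $b$-transhipment (Birkhoff--Diaz) then forces $f^\ast=f^{\mathcal N}(b)$. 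Evaluating the minimum for $b=\chi_{s,t}$ through the identity $\sum_{a=(u,v)}f_a(\pi_u-\pi_v)=\sum_{v}\pi_v b_v$ together with \Cref{lem:s-t-flows} would give
\[
\min\set{C_{\mathcal N}(f)\suchthat f\text{ is a }\chi_{s,t}\text{-transhipment in }G}=\tfrac{1}{r+1}\,R^{\mathcal N}_{s,t},
\]
with the convention $R^{\mathcal N}_{s,t}=+\infty$ (hence $U^{\mathcal N}_{s,t}=0$) when $s$ and $t$ lie in different connected components.

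Given this characterization, part (i) is immediate. Increasing the resistance of an arc leaves the feasible set unchanged but raises $C_{\mathcal N}$ pointwise, so the minimum grows and $R^{\mathcal N'}_{s,t}\geq R^{\mathcal N}_{s,t}$. Deleting an arc $a$ restricts the feasible set to those $\chi_{s,t}$-transhipments with $f_a=0$, on which the contents for $\mathcal N$ and $\mathcal N'$ coincide, so minimizing over the smaller set can only increase the value (trivially so if the deletion disconnects $s$ from $t$). Part (ii) is dual: decreasing an arc resistance lowers $C_{\mathcal N}$ pointwise, and contracting two nodes $u,w$ \emph{enlarges} the feasible set, since any $\chi_{s,t}$-transhipment of $G$ remains one in the contracted graph — the arc set is unchanged (self-loops created by the contraction are inert, as \eqref{eq:weymouth} forces zero flow on them), the flow-conservation constraint at the merged node is the sum of those at $u$ and at $w$, and the merged-node balance $b_u+b_w$ is consistent in every case, including when a terminal is contracted or when $\set{u,w}=\set{s,t}$ (which forces $R^{\mathcal N''}_{s,t}=0$). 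In all cases the conductance statement follows because $\rho\mapsto\rho^{-1/r}$ is nonincreasing on $(0,+\infty]$ for $r>0$.

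I expect the only genuine work to be the first step — the orthogonal-decomposition/first-order-optimality argument identifying the content minimizer with the potential-based flow, and the energy identity $\min C_{\mathcal N}=\tfrac{1}{r+1}R^{\mathcal N}_{s,t}$ obtained from \Cref{lem:s-t-flows}. If such a Thomson-type principle for potential-based flows is already recorded in the cited literature \cite{Reduction23,Raber22}, one can simply invoke it; otherwise this convex-duality lemma is the crux, and once it is in place each of the four assertions of \Cref{prop:contracting_deleting_arc} follows in a single line from how the stated operation changes the objective or the feasible region.
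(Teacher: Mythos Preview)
The paper does not supply its own proof of \Cref{prop:contracting_deleting_arc}; it is quoted verbatim as a known result from \cite{Reduction23,Raber22}. There is therefore nothing to compare your argument against within the paper itself.

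That said, your proposal is correct and is precisely the classical route to Rayleigh-type monotonicity for nonlinear resistive networks. The content functional $C_{\mathcal N}(f)=\sum_a\frac{\beta_a}{r+1}\abs{f_a}^{r+1}$ is strictly convex and coercive; its minimizer over the affine set of $b$-transhipments is characterized by the KKT condition $\nabla C_{\mathcal N}(f^\ast)\in\mathrm{range}(M^{\!\top})$, which is exactly \eqref{eq:weymouth}, and the energy identity $\min C_{\mathcal N}=\tfrac{1}{r+1}R^{\mathcal N}_{s,t}$ follows from summing $f_a(\pi_u-\pi_v)$ over arcs. Once this Thomson principle is in hand, each of the four monotonicity statements is indeed a one-line consequence of how the operation affects either the objective (pointwise monotone in $\beta$) or the feasible region (deletion restricts, contraction relaxes). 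Your treatment of the edge cases---disconnection giving $R^{\mathcal N'}_{s,t}=+\infty$, loops carrying zero flow, contraction of a terminal---is accurate. This variational approach is essentially what one finds in the cited sources, so had the paper included a proof it would very likely have matched yours.
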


\subsection{Graph Cuts}

In this paper, we present valid inequalities for~\eqref{problem}, which are based on cuts inside the network.
A \emph{cut} in a graph $G$ is a partition of the node set $V$ into two disjoint subsets $S\subseteq V$ and $V\setminus S\subseteq V$.
We call the arc set $\delta (S)\subseteq A$ the set of \textit{arcs crossing cut} $S$. 
A cut $S$ is an $(X,Y)$-\textit{cut} if $X\subseteq S$ and $Y\subseteq V\setminus S$.
If $X=\{s\}$ and $Y=\{t\}$, we change the notation slightly and call it an $(s,t)$-cut. 

For a weighted graph $G=(V,A,w)$ with non-negative arc weights $w\in \Rp^A$, we define the \emph{cut function} $z$ that maps a cut to the total weight of arcs crossing the cut, i.e., $z\colon 2^V\rightarrow \Rp$, $S\mapsto \sum_{a\in \delta(S)} w_a$, where $2^V$ is the power set of $V$.
The function $z$ is submodular (see, e.g., \cite{mccormick}), that is, 
\begin{equation}
    \label{eq:submodular}
    z(A\cup B) + z(A\cap B)  \leq z(A) +z(B) \quad \text{for all } A,B\subseteq V.
\end{equation}
Submodular functions can be minimized in polynomial time; see, e.g.,~\cite{mccormick}. 
More generally, a minimum of a submodular function defined on a subset $\mathcal X \subseteq 2^V$ closed under union and intersection can be computed in polynomial time.

We say that $S_1,\dots, S_k\in 2 ^V$ are $k$ disjoint $(X,Y)$-cuts if  $S_i$ is an $(X,Y)$-cut and $\delta(S_i)\cap \delta(S_j) =\emptyset$ for all $i\neq j$.
The $k$ disjoint $(X,Y)$-cut problem is the problem of finding $k$ disjoint $(X,Y)$-cuts such that the sum of the weight of the arcs crossing the cuts is minimal over all such cuts.
This problem is a slight generalization of the $k$ disjoint $(s,t)$-cut problem studied in~\cite{Wagner1990}.
In particular, for non-negative arc weights, it is shown in~\cite{Wagner1990} that the problem has a compact LP formulation and can thus be solved in polynomial time.


\section{Inequalities for Potential-based Flows}
In the following we state valid inequalities that must hold if there exists a potential-based flow satisfying global pressure bounds.
We start with an observation on rather simple networks defined on a path with parallel arcs.

\subsection{$(s,t)$-Flows on Multi-Paths}

In this subsection, we consider a multi-graph~$G=(V,A)$ with $k+1$ nodes, $V=\set{v_0,v_1,\dots, v_{k}}$, where only arcs between consecutive nodes $v_i$ and $v_{i+1}$ are allowed.
We denote by $A_i$ the set of arcs connecting node $v_{i-1}$ to $v_{i}$ such that $A=\cup_{i=1}^k A_i.$
We call such a graph \emph{multi-path of length~$k$}.
Additionally, we define a \emph{multi-path network of length $k$} as a network with an underlying multi-path of corresponding length.

Given such a multi-path network $\mathcal N$ with conductance vector $\mu\in \R^A_{>0}$, we introduce variables~$u_i\in \R_{>0}^k$ for the sum of the conductances of the arcs between the nodes~$v_{i-1}$ and $v_i$ for $i = 1, \dots, k$, so that
\begin{equation}
    \label{eq:def_u_i}
    u_i \define \sum_{a\in A_i} \mu_a.
\end{equation}

This first lemma gives a closed form expression for the effective conductance of multi-path networks.
\begin{lemma}
    \label{lem:eff_con_onion}
    Let $\mathcal N$ be a multi-path network of length $k$ and degree $r>0$, let $s =v_0$ and $t = v_k$.
    The effective conductance between $s$ and $t$ in $\mathcal{N}$ is given by
    \begin{equation}
        \label{eq:eff_con}
        U_{s,t}^\mathcal N = (\textstyle \sum_{i=1}^k u_i^{-r})^{-1/r}.
    \end{equation}
\end{lemma}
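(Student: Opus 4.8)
The plan is to compute the effective resistance $R^{\mathcal N}_{s,t}$ directly, by writing down the (unique) potential-based $(s,t)$-flow of value~$1$ and reading off the potential difference between $s=v_0$ and $t=v_k$, and then to pass to $U^{\mathcal N}_{s,t}$ via its definition $U^{\mathcal N}_{s,t}=(R^{\mathcal N}_{s,t})^{-1/r}$. The structural fact that makes this clean is that in a multi-path the only cuts separating $v_0$ from $v_k$ that matter are the ``layer cuts'': for $0\le i< k$ the arc set crossing $S_i\define\{v_0,\dots,v_i\}$ is exactly $A_{i+1}$. Hence, for a $b$-transhipment with $b=\chi_{s,t}$, summing the flow conservation constraints~\eqref{problem:flow} over $v\in S_i$ forces the net flow across $A_{i+1}$, oriented from $v_i$ to $v_{i+1}$, to equal $b(S_i)=1$.

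Next I would fix $(\pi,f)\define(\pi^{\mathcal N}(\chi_{s,t}),f^{\mathcal N}(\chi_{s,t}))$, the potential-based $(s,t)$-flow of value~$1$, which exists and is unique by Birkhoff and Diaz~\cite{birkhoff1956}, and set $\Delta_i\define\pi_{v_{i-1}}-\pi_{v_i}$. Every arc $a\in A_i$ has endpoints $v_{i-1},v_i$, so \eqref{eq:weymouth} rearranges (using $\mu_a=\beta_a^{-1/r}$) to $f_a=\mu_a\sign(\Delta_i)\abs{\Delta_i}^{1/r}$. Summing over $a\in A_i$ and invoking the flow value computed in the previous step yields $u_i\,\sign(\Delta_i)\abs{\Delta_i}^{1/r}=1$ with $u_i$ as in~\eqref{eq:def_u_i}; since the right-hand side is positive, $\Delta_i>0$ and $\Delta_i=u_i^{-r}$. (\Cref{lem:s-t-flows} independently confirms $\Delta_i\ge 0$, which could be cited instead of arguing positivity directly.) Telescoping then gives
\[
R^{\mathcal N}_{s,t}=\pi_s-\pi_t=\sum_{i=1}^k \Delta_i=\sum_{i=1}^k u_i^{-r},
\]
and therefore $U^{\mathcal N}_{s,t}=(R^{\mathcal N}_{s,t})^{-1/r}=\bigl(\sum_{i=1}^k u_i^{-r}\bigr)^{-1/r}$, as claimed.

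I do not expect a serious obstacle. The only points needing care are, first, the justification that flow conservation pins the flow across each layer $A_i$ to exactly~$1$ — a one-line induction or a direct summation over $S_i$ — and second, the bookkeeping of exponents and signs when inverting~\eqref{eq:weymouth}. As an alternative route one could argue via \Cref{thm:reduction}: first replace each parallel class $A_i$ by a single arc of resistance $u_i^{-r}$ (parallel conductances add, since all arcs of $A_i$ see the same potential difference), and then compose the resulting series path. However, the direct computation above already subsumes both the parallel and the series reduction, so I would favor it for being self-contained.
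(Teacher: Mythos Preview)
Your argument is correct and self-contained. The paper, however, takes the alternative route you sketch at the end: it invokes the known series/parallel reduction rules from~\cite{Gross19} (parallel conductances add, series resistances add) to collapse each bundle $A_i$ to a single arc of conductance $u_i$ and then the resulting path to a single arc of resistance $\sum_i u_i^{-r}$, and finishes via \Cref{thm:reduction}. Your direct computation of the unique potential-based $(s,t)$-flow of value~$1$ sidesteps these external lemmas and in effect re-derives both reduction rules simultaneously for this special topology; it is more elementary but less modular. One small point of care: arcs in $A_i$ may be oriented either way, so the identity $f_a=\mu_a\sign(\Delta_i)\abs{\Delta_i}^{1/r}$ holds only up to a sign depending on orientation, but this sign is exactly absorbed when you form the net flow out of $S_{i-1}$, so the equation $u_i\,\sign(\Delta_i)\abs{\Delta_i}^{1/r}=1$ is indeed what you get.
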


\begin{proof}
    It is shown in~\cite[Lemmas 6 \& 7]{Gross19} that the conductance of parallel arcs sums up in the same way as the resistance of arcs in series.
    We can therefore reduce all parallel arcs in $A_i$ to one arc $a_i= (v_{i-1},v_{i})$ with conductance $u_i$ without changing the behavior of the potentials at $s$ and $t$.
    The resistance of these arcs is then given by $\beta_{a_i} = u_i^{-r}$.
    We further reduce the network to one arc $a=(s,t)$ with resistance $\beta_a = \sum_{i=1}^k \beta_{a_i}$ again without changing the behavior of the potentials at $s$ and $t$. 
    Let $\mathcal N'=((\set{s,t},\set{(s,t)}),(\beta_a),r)$ be the resulting network. 
    Because $\mathcal N$ and $\mathcal N'$ are equivalent with respect to the node set $\set{s,t}$, we have $R_{s,t}^\mathcal N = R_{s,t}^{\mathcal N'}  = \beta_a$ according to \Cref{thm:reduction}.
    Using the definition of effective conductance, we obtain
    \[
    U_{s,t}^\mathcal N  = \frac{1}{\sqrt[r]{\beta_a}} = (\textstyle \sum_{i=1}^k \beta_{a_i})^{-1/r} =  (\textstyle \sum_{i=1}^k u_i^{-r})^{-1/r}.\eqno\qed
    \]
\end{proof}

We proceed to use this closed formula to obtain a lower bound on the effective conductance of a network that supports an $(s, t)$-flow of value $d$. 

\begin{lemma}
\label{lem:n_onions}
    Let $\mathcal N$ be a multi-path network of degree $r>0$ and length $k$, let $s =v_0$ and $t = v_k$.
    Let $(\pi,f)$ be a potential-based $(s,t)$-flow of value $d>0$.
    If $\pi\in [0,\bar{\pi}]^V$, then
    \[  U_{s,t}^\mathcal N   \geq \frac{d}{\sqrt[r]{\bar{\pi}}}. \]
\end{lemma}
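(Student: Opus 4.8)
The plan is to read off the bound directly from \Cref{lem:s-t-flows}, which already packages the two facts we need about two-terminal flows: the monotonicity of the potentials along the flow and the exact relation between the potential drop from $s$ to $t$ and the effective resistance. Note that the multi-path structure is not actually needed for this particular step, since \Cref{lem:s-t-flows} holds for arbitrary networks; the statement is phrased for multi-paths only because that is the setting in which it will later be combined with the closed-form expression of \Cref{lem:eff_con_onion}.

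First I would invoke \Cref{lem:s-t-flows} with the given potential-based $(s,t)$-flow $(\pi,f)$ of value $d>0$. It yields $\pi_s \ge \pi_v \ge \pi_t$ for all $v \in V$ and, more importantly, the identity
\[
\pi_s - \pi_t = d^r \cdot R_{s,t}^{\mathcal N}.
\]
Since by assumption $\pi \in [0,\bar{\pi}]^V$, we have $\pi_s \le \bar{\pi}$ and $\pi_t \ge 0$, hence $\pi_s - \pi_t \le \bar{\pi}$. Combining this with the displayed identity gives $d^r \cdot R_{s,t}^{\mathcal N} \le \bar{\pi}$, that is,
\[
R_{s,t}^{\mathcal N} \le \frac{\bar{\pi}}{d^r}.
\]
Finally I would pass to effective conductances: because $x \mapsto x^{-1/r}$ is strictly decreasing on $\Rpp$ and $R_{s,t}^{\mathcal N} > 0$, the previous inequality implies
\[
U_{s,t}^{\mathcal N} = \bigl(R_{s,t}^{\mathcal N}\bigr)^{-1/r} \ge \Bigl(\frac{\bar{\pi}}{d^r}\Bigr)^{-1/r} = \frac{d}{\sqrt[r]{\bar{\pi}}},
\]
which is exactly the claimed bound.

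There is essentially no obstacle here once \Cref{lem:s-t-flows} is available. As a consistency check one could instead argue directly on the multi-path: writing $\pi_i$ for the potential at $v_i$, the reductions used in the proof of \Cref{lem:eff_con_onion} turn the segment between $v_{i-1}$ and $v_i$ into a single arc of conductance $u_i$ carrying flow $d$, so the defining equation~\eqref{eq:weymouth} gives $\pi_{i-1} - \pi_i = d^r u_i^{-r}$; summing over $i$ telescopes to $\pi_0 - \pi_k = d^r \sum_{i=1}^k u_i^{-r} = \bigl(d / U_{s,t}^{\mathcal N}\bigr)^r$ by \eqref{eq:eff_con}, and $\pi_0 - \pi_k \le \bar{\pi}$ yields the same conclusion. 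The only minor point to keep in mind is that we do not assume $\pi$ is the representative normalized to minimum potential $0$; this is immaterial, since $0 \le \pi_t \le \pi_s \le \bar{\pi}$ already bounds the drop by $\bar{\pi}$.
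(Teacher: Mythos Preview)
Your proof is correct and follows essentially the same approach as the paper: both invoke \Cref{lem:s-t-flows} to obtain $\pi_s - \pi_t = d^r R_{s,t}^{\mathcal N}$, bound the left side by $\bar{\pi}$, and convert to conductance. The only cosmetic difference is that the paper rewrites the identity as $\sqrt[r]{\pi_s-\pi_t}\cdot U_{s,t}^{\mathcal N}=d$ before applying the bound, whereas you first bound $R_{s,t}^{\mathcal N}$ and then pass to $U_{s,t}^{\mathcal N}$; your additional remarks on the multi-path consistency check and on normalization are correct extras.
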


\begin{proof}
    Because $(\pi,f)$ is the potential-based flow in $\mathcal N$ induced by $b = d\cdot \chi_{s,t}$, it follows from \Cref{lem:s-t-flows} that
    \begin{align*}
        \pi_s - \pi_t  
        = \pi^{\mathcal{N}}_s(d\cdot \chi_{s,t}) - \pi^{\mathcal{N}}_t(d\cdot \chi_{s,t}) 
        = d^r R_{s,t}^{\mathcal{N}},
    \end{align*}
    which is equivalent to $\sqrt[r]{\pi_s - \pi_t} \cdot U_{s,t}^{\mathcal{N}} = d$.
    Since $\bar{\pi} \geq \pi_s - \pi_t$, we obtain
    \[ 
    \sqrt[r]{\bar{\pi}} \cdot U_{s,t}^{\mathcal{N}} \geq \sqrt[r]{\pi_s - \pi_t } \cdot U_{s,t}^{\mathcal{N}} = d.
    \eqno\qed
    \]
\end{proof}

In the following we state an inequality for potential-based $(s,t)$-flows with global pressure bounds $[0,\bar{\pi}]$ which is linear in the conductance $\mu$. 
Intuitively, it states that the average flow that can be sent from node $v_i$ to node $v_{i+1}$, when giving these nodes a potential difference of $\bar{\pi}/k$, must be at least the value of a maximum potential-based $(s,t)$-flow;
see \Cref{fig:2onions} for an example.

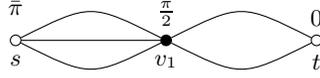
\begin{figure}[tb]
\tikzstyle{nodeL}=[circle,draw, minimum size=4pt, inner sep = 0]
\tikzstyle{nodeI}=[circle,draw, minimum size=4pt, inner sep = 0, fill]
\tikzstyle{edge}=[]
   \centering
        \begin{tikzpicture}
                \node (dummy) at (0,1.2) {};
                \node [nodeL, label={below:$s$}  ] (s) at (-2,0) {};
                \node [label={above:$\bar{\pi}$}, yshift=3pt] at (-2,0) {};
                \node [nodeI, label={below:$v_1$}, label={above:$\tfrac{\bar{\pi}}{2}$}](n) at (0,0) {};
                \node [nodeL, label={below:$t$}, label={above:$0$}] (t) at (2,0) {} ;

                \draw [edge] 
                (s) -- (n);
                
                \draw[edge] 
                (s) .. controls (-1,0.5) .. (n)
                (n) .. controls (1,0.5) .. (t);   

                \draw[edge] 
                (s) .. controls (-1,-0.5) .. (n)
                (n) .. controls (1,-0.5) .. (t);  
        \end{tikzpicture}
\caption{Assuming $\mu = \mathbf{1}$, we can send a flow of $3\sqrt[r]{\bar{\pi}/2}$ over $A_1$, and  $2\sqrt[r]{\bar{\pi}/2}$ over $A_2$. Thus, by \Cref{lem:eff_cond_inequality} a feasible potential-based $(s,t)$-flow in the depicted network has at most a value of $5/2 \sqrt[r]{\bar{\pi}/2}$.}
\label{fig:2onions}
\end{figure}

\begin{theorem}
    \label{lem:eff_cond_inequality}
    Let $\mathcal N$ be a multi-path network of length $k$ and degree $r>0$, let $s =v_0$ and $t = v_k$.
    Let $(\pi,f)$ be a potential-based $(s,t)$-flow of value $d>0$.
    If $\pi\in [0,\bar{\pi}]^V$, then
    \begin{equation*}
        \frac{1}{k\sqrt[r]{k}}\sum_{i=1}^k \sum_{a\in A_i} \mu_a \geq \frac{d}{\sqrt[r]{\bar{\pi}}}.
    \end{equation*}
\end{theorem}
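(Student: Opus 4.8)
The plan is to combine the closed-form expression for the effective conductance from \Cref{lem:eff_con_onion} with the lower bound from \Cref{lem:n_onions}, and then to relate the quantity $(\sum_{i=1}^k u_i^{-r})^{-1/r}$ to the linear quantity $\frac{1}{k}\sum_{i=1}^k u_i$ via a power-mean (or Jensen / Cauchy--Schwarz-type) inequality. Concretely, by \Cref{lem:eff_con_onion} we have $U_{s,t}^\mathcal N = (\sum_{i=1}^k u_i^{-r})^{-1/r}$, and by \Cref{lem:n_onions} this is at least $d/\sqrt[r]{\bar\pi}$. So it suffices to show that $\frac{1}{k\sqrt[r]{k}}\sum_{i=1}^k u_i \geq (\sum_{i=1}^k u_i^{-r})^{-1/r}$, since $\sum_{a \in A_i}\mu_a = u_i$ by definition~\eqref{eq:def_u_i}.

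First I would rewrite the target inequality in a cleaner form. Raising both sides to the $r$-th power, the claim $\frac{1}{k\sqrt[r]{k}}\sum_i u_i \geq (\sum_i u_i^{-r})^{-1/r}$ is equivalent to $\bigl(\frac{1}{k}\sum_i u_i\bigr)^r \cdot \frac{1}{k} \geq \bigl(\sum_i u_i^{-r}\bigr)^{-1}$, i.e.\ to $\bigl(\frac{1}{k}\sum_i u_i\bigr)^r \cdot \bigl(\frac{1}{k}\sum_i u_i^{-r}\bigr) \geq 1$. This is exactly the statement that the arithmetic mean of the $u_i$ is at least the $(-r)$-power mean of the $u_i$: writing $M_p$ for the $p$-power mean of the positive numbers $u_1,\dots,u_k$, we want $M_1 \geq M_{-r}$, which holds because the power mean $M_p$ is nondecreasing in $p$ and $1 > -r$ (as $r > 0$). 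Equivalently, one can observe $\frac{1}{k}\sum_i u_i^{-r} \ge \bigl(\frac{1}{k}\sum_i u_i\bigr)^{-r}$ by convexity of $t \mapsto t^{-r}$ on $\Rpp$ and Jensen's inequality, and then rearrange.

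Putting the pieces together: from \Cref{lem:n_onions} and \Cref{lem:eff_con_onion},
\begin{equation*}
\Bigl(\textstyle\sum_{i=1}^k u_i^{-r}\Bigr)^{-1/r} = U_{s,t}^\mathcal N \geq \frac{d}{\sqrt[r]{\bar\pi}},
\end{equation*}
and from the power-mean inequality $M_1 \geq M_{-r}$ applied to $u_1,\dots,u_k$,
\begin{equation*}
\frac{1}{k}\sum_{i=1}^k u_i \geq \Bigl(\frac{1}{k}\sum_{i=1}^k u_i^{-r}\Bigr)^{-1/r} = k^{-1/r}\Bigl(\sum_{i=1}^k u_i^{-r}\Bigr)^{-1/r} \geq k^{-1/r}\cdot\frac{d}{\sqrt[r]{\bar\pi}},
\end{equation*}
so dividing by $k$ and substituting $u_i = \sum_{a\in A_i}\mu_a$ yields $\frac{1}{k\sqrt[r]{k}}\sum_{i=1}^k\sum_{a\in A_i}\mu_a \geq \frac{d}{\sqrt[r]{\bar\pi}}$, as claimed.

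I expect the only real point requiring care to be the direction and exact normalization in the power-mean step — in particular keeping track of the $k^{-1/r}$ factor and making sure the inequality $t \mapsto t^{-r}$ convex gives $M_{-r} \le M_1$ rather than the reverse. Everything else is a direct invocation of the two preceding lemmas, and no case distinction on $r$ is needed since $r>0$ already guarantees $-r < 1$ and the convexity of $t \mapsto t^{-r}$ on the positive reals. An alternative self-contained route, avoiding citing the power-mean inequality by name, is to apply the Cauchy--Schwarz / Hölder inequality directly to the pairing $\sum_i u_i^{1/2}\cdot u_i^{-1/2}$ (for $r=1$) or Hölder with conjugate exponents tuned to $r$ in general; I would mention this as a fallback but prefer the clean power-mean phrasing.
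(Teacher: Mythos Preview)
Your approach is correct and essentially the same as the paper's: both combine \Cref{lem:eff_con_onion} and \Cref{lem:n_onions} to reduce the claim to the purely algebraic inequality $\frac{1}{k\sqrt[r]{k}}\sum_i u_i \geq \bigl(\sum_i u_i^{-r}\bigr)^{-1/r}$, and then verify the latter; the paper does so by applying AM--GM once to $u_1,\dots,u_k$ and once to $u_1^{-r},\dots,u_k^{-r}$, which is exactly a proof of the power-mean comparison $M_1\geq M_{-r}$ you invoke. One small slip to fix in your final display: $\bigl(\frac{1}{k}\sum_i u_i^{-r}\bigr)^{-1/r} = k^{1/r}\bigl(\sum_i u_i^{-r}\bigr)^{-1/r}$, not $k^{-1/r}(\cdots)$, so the chain reads $\frac{1}{k}\sum_i u_i \geq k^{1/r}\,d/\sqrt[r]{\bar\pi}$ and the conclusion follows by dividing through by $k^{1/r}$ (no additional division by $k$ is needed).
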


\begin{proof}
    For the proof we use \Cref{lem:n_onions} and claim that
    \begin{equation}
        \label{eq:eff_conductance_bound}
        \frac{1}{k\sqrt[r]{k}} {\sum_{i=1}^k \sum_{a\in A_i} \mu_a}  
        \geq  U^\mathcal N_{s,t}.
    \end{equation}
    Note that~\eqref{eq:eff_conductance_bound} is by Equations~\eqref{eq:def_u_i} and~\eqref{eq:eff_con} equivalent to 
    \begin{equation}
         \label{eq:eff_conductance_bound_equivalent}
        \frac{1}{k\sqrt[r]{k}}\sum_{i=1}^k u_i 
        \geq  \Big(\sum_{i=1}^k u_i^{-r}\Big)^{-1/r}.
    \end{equation}
    We show that~\eqref{eq:eff_conductance_bound_equivalent} holds for any positive numbers $u_1,\dots,u_n \in \Rpp$ by applying the inequality of arithmetic and geometric mean twice, once for $u_1,\dots, u_k$ and once for $u_1^{-r},\dots, u_k^{-r}$, so that
    \begin{align*}
        \textstyle \Big(\sum_{i=1}^k u_i \Big)\Big(\sum_{i=1}^k u_i^{-r} \Big)^{1/r}
        & \textstyle \geq k \Big(\prod_{i=1}^k u_i \Big)^{1/k}  \Big(k \Big( \prod_{i=1}^k u_i^{-r} \Big)^{1/k}\Big)^{1/r}\\
        & \textstyle =k \sqrt[r]{k} \Big(\prod_{i=1}^k u_i \big( u_i^{-r}\big)^{1/r}\Big)^{1/k}
        = k \sqrt[r]{k}.&\hfill\qed
    \end{align*}
\end{proof}

\subsection{General $(s,t)$-Flows}
For $(s,t)$-flows in general networks, \Cref{lem:eff_cond_inequality} extends in the following way.

\begin{theorem}
    \label{thm:n-cuts_inequality}
    Let $\mathcal N=(G,\beta, r)$ be a potential-based flow network and let $s$, $t\in V$.
    If there exists a potential-based $(s,t)$-flow of value $d > 0$ in $\mathcal N$ with potentials $\pi \in [0,\bar{\pi}]^V$,
    then
    \begin{equation}
        \label{eq:st-cut}
        \frac{1}{k\sqrt[r]{k}}\sum_{i=1}^k \sum_{a\in \delta(S_i)} \mu_a \geq \frac{d}{\sqrt[r]{\bar{\pi}}}
    \end{equation}
    holds for all $k$ disjoint $(s,t)$-cuts $ S_1\subseteq \dots \subseteq S_{k} \subseteq  V$.
\end{theorem}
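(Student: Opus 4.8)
The plan is to reduce the statement to the multi-path case of \Cref{lem:eff_cond_inequality} by contracting the ``layers'' that lie between consecutive cuts into single nodes. Given the nested family $S_1\subseteq\dots\subseteq S_k$ of $(s,t)$-cuts, set $S_0\define\emptyset$ and $S_{k+1}\define V$, and define the layers $W_i\define S_{i+1}\setminus S_i$ for $i=0,\dots,k$. Since each $S_i$ is an $(s,t)$-cut, we have $s\in W_0$ and $t\in W_k$; since the $\delta(S_i)$ are pairwise disjoint and nonempty (the graph is weakly connected), the $S_i$ are distinct, so every $W_i$ is nonempty, and the $W_i$ partition $V$.

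First I would record the structural consequence of disjointness. For an arc $a$ with one endpoint in $W_i$ and the other in $W_j$ with $i<j$, the nesting of the $S_\ell$ gives $a\in\delta(S_\ell)$ precisely for $\ell\in\{i+1,\dots,j\}$; as the cuts are pairwise disjoint, $a$ can lie in at most one $\delta(S_\ell)$, which forces $j=i+1$. Hence, contracting each $W_i$ to a single node $w_i$ and discarding loops yields a network $\mathcal N''$ whose only arcs run between consecutive nodes $w_{i-1}$ and $w_i$, i.e.\ $\mathcal N''$ is a multi-path network of length $k$. Contraction leaves arc resistances and conductances unchanged, and the arc set joining $w_{i-1}$ to $w_i$ is exactly $\delta(S_i)$; therefore the quantity $u_i\define\sum_{a\in A_i}\mu_a$ from \Cref{lem:eff_con_onion}, applied to $\mathcal N''$, equals $\sum_{a\in\delta(S_i)}\mu_a$.

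The remaining ingredients are monotonicity under contraction and the closed form for the multi-path conductance. Applying \Cref{lem:s-t-flows} to $\mathcal N$ gives $\pi_s-\pi_t=d^r R^{\mathcal N}_{s,t}$; since $\pi\in[0,\bar{\pi}]^V$, we get $R^{\mathcal N}_{s,t}\le\bar{\pi}/d^r$, equivalently $U^{\mathcal N}_{s,t}\ge d/\sqrt[r]{\bar{\pi}}$. Because $\mathcal N''$ arises from $\mathcal N$ by a sequence of node contractions (with $s$ and $t$ absorbed into $w_0$ and $w_k$, which then play their roles), the monotonicity of effective conductance under contraction from \Cref{prop:contracting_deleting_arc} yields $U^{\mathcal N''}_{w_0,w_k}\ge U^{\mathcal N}_{s,t}$. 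By \Cref{lem:eff_con_onion}, $U^{\mathcal N''}_{w_0,w_k}=(\sum_{i=1}^k u_i^{-r})^{-1/r}$, and the purely algebraic inequality~\eqref{eq:eff_conductance_bound_equivalent} established in the proof of \Cref{lem:eff_cond_inequality} gives $\frac{1}{k\sqrt[r]{k}}\sum_{i=1}^k u_i\ge(\sum_{i=1}^k u_i^{-r})^{-1/r}$. Chaining these inequalities and substituting $u_i=\sum_{a\in\delta(S_i)}\mu_a$ gives~\eqref{eq:st-cut}.

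I expect the structural step to be the main obstacle: one must argue carefully that the disjointness hypothesis forces every arc of the contracted graph to run between consecutive layers, so that an honest multi-path network emerges and \Cref{lem:eff_con_onion} applies with $A_i=\delta(S_i)$. A minor secondary point is that $s$ and $t$ are merged into the terminal nodes $w_0$ and $w_k$ of $\mathcal N''$, so \Cref{prop:contracting_deleting_arc} must be invoked with the convention that the merged node inherits the terminal role; once these are settled, the argument is just a chain of the four inequalities above.
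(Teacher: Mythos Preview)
Your proof is correct and follows essentially the same approach as the paper: contract the layers $S_{i+1}\setminus S_i$ to obtain a multi-path network, use \Cref{prop:contracting_deleting_arc} for monotonicity of the effective conductance/resistance, and reduce to \Cref{lem:eff_cond_inequality}. The only cosmetic difference is that the paper verifies the potentials of the $(s,t)$-flow in the contracted network lie in $[0,\bar\pi]$ and then applies \Cref{lem:eff_cond_inequality} as a black box, whereas you derive $U^{\mathcal N}_{s,t}\ge d/\sqrt[r]{\bar\pi}$ in the original network first and then invoke monotonicity and the algebraic inequality~\eqref{eq:eff_conductance_bound_equivalent} directly; you also spell out the structural step (disjointness forces arcs between consecutive layers) that the paper leaves implicit.
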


\begin{proof}
    By contracting the nodes in $S_i\setminus S_{i-1}$ for each $i=1,\dots,k+1$, where we set $S_0\coloneqq \emptyset$ and $S_k \coloneqq V$, we obtain a multi-path network $\mathcal N'$ of length $k$, so that $A_i = \delta(S_i)$.
    Note that the effective resistance between $s$ and $t$ does not increase in $\mathcal N'$ by \Cref{prop:contracting_deleting_arc}\eqref{it:prop-contracting}, i.e., $ R^{\mathcal N'}_{s,t} \leq R^\mathcal N_{s,t} $.
    
    Consider the potential $\pi'\coloneqq \pi^{\mathcal N}(d\cdot \chi_{s,t})$ of the potential-based $(s,t)$-flow of value $d$ in $\mathcal N'$, with a fixed potential of~$0$ at node~$t$.
    Since the highest potential is attained at $s$ and the lowest at $t$,
    according to \Cref{lem:s-t-flows} it holds for every node~$v_i$ in the multi-path that 
    \begin{equation*}
        0 \leq \pi'_{v_i} \leq \pi'_s -\pi'_t = d^r R^{\mathcal N'}_{s,t} \leq d^r R^\mathcal N_{s,t} = \pi_s -\pi_t \leq \bar{\pi}.
    \end{equation*}
    Applying \Cref{lem:eff_cond_inequality} to $\mathcal N'$ yields the desired inequality.
\qed\end{proof}

\subsection{$b$-Transshipments}
Before we state our main result, we note that every potential-based $b$-trans\-ship\-ment satisfying global pressure bounds 
yields, for any subset $X\subseteq T$ of terminal nodes, a valid potential-based flow 
with sources in $X$ and sinks not in $X$.

\begin{theorem}
\label{thm:b-transhipment}
Let $\mathcal N=(G,\beta,r)$ be a potential-based flow network with balance vector~$b\in B(T)$,
and~$f$ a potential-based $b$-transshipment with corresponding node potentials $\pi\in[0,\bar\pi]^V$.
Then, for any~$X\subseteq T$ with~$b(X)\geq0$, there is a potential-based $b'$-transshipment~$f'$ with corresponding potentials~$\pi'\in[0,\bar\pi]^V$, and
\begin{align}
\{v\suchthat b'_v>0\}\subseteq T^+\cap X,\quad\{v\suchthat b'_v<0\}\subseteq T^-\setminus X,\quad\sum_{v\in T^+\cap X}b'_v=b(X).
\label{eq:b'-conditions}
\end{align}
\end{theorem}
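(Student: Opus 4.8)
The plan is to construct the desired transshipment $f'$ by starting from the given $b$-transshipment $f$ and iteratively removing flow on cycles and on paths ending at the ``wrong'' terminals, until the only remaining sources lie in $T^+\cap X$ and the only remaining sinks lie in $T^-\setminus X$. The key structural fact I would exploit is \Cref{lem:s-t-flows} and, more generally, the monotonicity of potentials along flow: in a potential-based flow the potential decreases in the direction of positive flow, so that flow always moves from higher to lower potential. This means that if I decompose $f$ (viewed as a classical flow) into paths and cycles, every path goes from a node of higher potential to one of lower potential, and the potential values along it are monotone. The point of working with $b\in B(T)$ is that all sources of $f$ sit in $T^+$ and all sinks in $T^-$.

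First I would argue that it suffices to produce \emph{any} classical flow $g$ satisfying the balance conditions~\eqref{eq:b'-conditions} whose support is ``consistent'' with a potential function bounded in $[0,\bar\pi]$; then the existence and uniqueness result of Birkhoff and Diaz (quoted after \Cref{thm:reduction}) gives a genuine potential-based $b'$-transshipment, and I must still check its potentials stay in $[0,\bar\pi]$. The cleaner route is: consider the balance vector $b'$ on $T$ defined by pushing the net demand $b(X)\geq 0$ out of the nodes of $T^+\cap X$ and into the nodes of $T^-\setminus X$ in some feasible way (this is possible precisely because $b(X)\ge 0$ and $b(T)=0$ forces $b(T\setminus X)=-b(X)\le 0$, while $b$ restricted to $T^-\setminus X$ is already $\le 0$ with total mass at least $b(X)$ in absolute value — here one has to be slightly careful and may need to also allow sinks among $T^-\cap X$ or reduce sources; I would phrase $b'$ so that $\{v: b'_v>0\}\subseteq T^+\cap X$ and $\{v:b'_v<0\}\subseteq T^-\setminus X$ with the prescribed total, which is a small feasibility-of-transportation check). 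Let $(\pi',f')=(\pi^{\mathcal N}(b'),f^{\mathcal N}(b'))$ be the unique potential-based $b'$-transshipment. It remains only to show $\pi'\in[0,\bar\pi]^V$.

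For the potential bound I would compare $\pi'$ with $\pi$. The natural tool is a maximum-principle / comparison argument: since $b'$ has sources only among sources of $b$ and sinks only among sinks of $b$, and the total throughput is no larger, one expects the potential spread of $f'$ to be dominated by that of $f$. Concretely, I would show $\max_v\pi'_v-\min_v\pi'_v\le\max_v\pi_v-\min_v\pi_v\le\bar\pi$ and then use shift-invariance to renormalize into $[0,\bar\pi]$. To prove the spread inequality, I would let $s^*$ attain the maximum of $\pi'$ and $t^*$ the minimum; by the potential-monotonicity of potential-based flow, $s^*$ must be a source of $b'$ (else flow enters from a higher-potential neighbor, contradicting maximality) and similarly $t^*$ a sink of $b'$, hence $s^*\in T^+$ and $t^*\in T^-$, which are also (candidate) source/sink locations for $b$. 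Then I would bound $\pi'_{s^*}-\pi'_{t^*}$ by routing the $b'$-flow and comparing cut-by-cut to $b$, or alternatively invoke \Cref{prop:contracting_deleting_arc} after reducing to an $(s^*,t^*)$ two-terminal network and using that $f'$ sends at most as much net flow across every $(s^*,t^*)$-cut as $f$ does.

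**The main obstacle** I anticipate is exactly this last comparison of potential spreads: \Cref{lem:s-t-flows} is stated only for a \emph{single-source single-sink} $(s,t)$-flow, whereas $b'$ may have several entries and exits, so I cannot directly equate the spread with $d^r R_{s,t}$. The fix I would pursue is a path-decomposition argument: decompose $f$ into $s_j$–$t_j$ paths $P_j$ with values $d_j$ (and cycles, which carry no net balance and can be deleted without changing $b$ or violating the potential equation, since on a cycle the potential drops sum to zero). Each $P_j$ then realizes an $(s_j,t_j)$-flow component, and I would build $f'$ by keeping only the path components whose source lies in $T^+\cap X$, truncating or rerouting as needed so the bookkeeping~\eqref{eq:b'-conditions} holds; because every retained sub-flow lives ``underneath'' $f$ in the sense that its support carries flow in the same direction with no larger magnitude, the associated potentials — being determined by the same monotone $\pi$ along each retained path — stay sandwiched in $[0,\bar\pi]$. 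Making the rerouting precise so that the \emph{new} potentials $\pi'$ (not just the restriction of $\pi$) obey the bound is the delicate point, and I would handle it by first establishing the spread inequality for the reduced/contracted two-terminal networks via \Cref{prop:contracting_deleting_arc} and then lifting back.
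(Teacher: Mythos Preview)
Your proposal has a genuine gap. The first route you sketch --- choose $b'$ a priori with the right support and then argue that the spread of $\pi'=\pi^{\mathcal N}(b')$ is dominated by that of $\pi$ --- is false in general, even under the hypotheses ``sources of $b'$ among sources of $b$, sinks of $b'$ among sinks of $b$, no larger total throughput''. Take $r=1$ and nodes $s_1,t_1,s_2,t_2$ with unit-resistance arcs $(s_1,t_1)$, $(s_2,t_2)$, joined only by an arc $(s_1,s_2)$ of huge resistance $R$. For $b_{s_i}=1$, $b_{t_i}=-1$ the potential spread is essentially $1$, whereas for $b'=\chi_{s_1,t_2}$ the unique potential-based $b'$-flow must traverse the $R$-arc and has spread $R+1$. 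Hence neither your spread comparison nor the accompanying claim ``$f'$ sends at most as much across every $(s^*,t^*)$-cut as $f$'' holds; the choice of $b'$ cannot be decoupled from~$f$.

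Your second route --- decompose $f$ and retain a subflow $\hat f$ with $0\le\hat f\le f$ and the required balance pattern --- is the right starting point and is exactly the max-flow/min-cut step the paper performs. What is missing is the mechanism that turns $\hat f$ into a potential-based flow in $\mathcal N$ with bounded potentials. The paper's key observation is that $\hat f$ \emph{is} already a potential-based flow with the \emph{same} potentials $\pi\in[0,\bar\pi]^V$, once one raises each arc resistance to $\hat\beta_a\coloneqq(\pi_u-\pi_v)/\hat f_a^{\,r}\geq\beta_a$. One then attaches a super-source $s$ (potential $\bar\pi$) and super-sink $t$ (potential $0$) with suitably chosen resistances, so that the whole thing becomes a two-terminal $(s,t)$-flow of value $b(X)$. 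Lowering the resistances on $A$ back from $\hat\beta$ to $\beta$ can only decrease the effective $(s,t)$-resistance (\Cref{prop:contracting_deleting_arc}), so by \Cref{lem:s-t-flows} the resulting $(s,t)$-flow of value $b(X)$ has $\pi'_s-\pi'_t\leq\bar\pi$ with all other potentials in between; restricting to $G$ gives the desired $b'$-transshipment. This raise-then-lower resistance manoeuvre, combined with the super-source/sink reduction to the two-terminal setting where \Cref{lem:s-t-flows} applies, is precisely the idea your outline does not supply.
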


\begin{proof}
We assume w.l.o.g.\ that the arcs in~$A$ are oriented such that~$f_a\geq0$ for all~$a\in A$.
Flow conservation implies that 
\begin{align}
\sum_{a\in\delta^+(S)}f_a\geq b(X)\qquad\text{for each~$S\subseteq V$ with~$X\cap T^+\subseteq S\subseteq V\setminus (T^-\setminus X)$.}
\label{eq:cut-condition}
\end{align}
If we interpret the flow value~$f_a$ as a capacity of arc~$a\in A$, then~\eqref{eq:cut-condition} and the max-flow min-cut theorem imply that there exists a feasible $\hat{b}$-transshipment~$\hat{f}$ in~$G$ with~$0\leq\hat{f_a}\leq f_a$ for all~$a\in A$ and~$\hat{b}\in\R^V$ satisfying the exact same conditions stated for~$b'$ in~\eqref{eq:b'-conditions}.

We can now interpret~$\hat{f}$ as a potential-based flow with the same node potentials~$\pi\in[0,\bar{\pi}]^V$ as~$f$ in network~$\hat{\mathcal N}\coloneqq(G,\hat{\beta},r)$ with increased arc resistances~$\hat{\beta}\geq\beta$: simply let
\[
\hat{\beta}_a\coloneqq\frac{\pi_v-\pi_w}{{\hat{f_a}}^r}\geq\frac{\pi_v-\pi_w}{{f_a}^r}=\beta_a\qquad\text{for all~$a=(v,w)\in A$.}
\]
In the case that~$\hat{f_a}=0$ for some arc~$a\in A$, the resistance~$\hat{\beta}_a$ is infinite, which is equivalent to removing the arc from the network.

We further modify the network by adding a super-source node~$s$ with arcs from~$s$ to all source nodes~$v\in V$ with~$\hat{b}_v>0$, as well as a super-sink node~$t$ with arcs from all sink nodes~$v\in V$ with~$\hat{b}_v<0$ to~$t$. 
We extend potentials~$\pi\in[0,\bar{\pi}]^V$ to potentials on~$V\cup\{s,t\}$ by setting $\pi_s\coloneqq\bar{\pi}$ and~$\pi_t\coloneqq0$.
Moreover, we set the resistances of arcs leaving~$s$ and of arcs entering~$t$ in such a way that the node potentials~$\pi$ induce an~$(s,t)$-flow of value~$b(X)$ in the extended network. 
More precisely, we set
\begin{align*}
\hat{\beta}_{(s,v)}&:=\frac{\pi_s-\pi_v}{{\hat{b}_v}^r}&&\text{for all~$v\in V$ with~$\hat{b}_v>0$, \quad \text{and}}\\
\hat{\beta}_{(v,t)}&:=\frac{\pi_v-\pi_t}{-{\hat{b}_v}^r}&&\text{for all~$v\in V$ with~$\hat{b}_v<0$.}
\end{align*}
If the resistance of an arc~$(s,v)$ or~$(v,t)$ is set to zero, this is equivalent to merging node~$v$ with~$s$ or~$t$, respectively, as both have the same potential. If we now decrease the arc resistances of arcs~$a\in A$ from~$\hat{\beta}_a$ to their original values~$\beta_a$, then, by \Cref{prop:contracting_deleting_arc},
there still exists a potential-based $(s,t)$-flow~$f'$ of the same value~$b(X)$ with corresponding node potentials~$\pi'\in[0,\bar{\pi}]^{V\cup\{s,t\}}$. Restricting~$f'$ to the original network, that is, deleting nodes~$s$ and~$t$ together with their incident arcs, yields the desired~$b'$-transshipment satisfying~\eqref{eq:b'-conditions}.
\qed
\end{proof}

\section{Inequalities for the Topology Optimization Problem}

After having established several inequalities for potential-based flows with global pressure bound $\bar{\pi}$, we are now returning to the topology optimization problem~\ref{problem} and present our main result.

Given an instance $(\mathcal N,r,b,\bar{\pi},c)$ of~\eqref{problem}, we consider a feasible solution $(\pi,f,x)$.
We state a linear inequality in $x$, which can then directly be added in a spatial branch-and-bound algorithm to solve~\eqref{problem}, see also Section~\ref{sec:ComputationalResults}.

\begin{definition}
    \label{def:Nx}
    Let $\mathcal N=(G,\beta,r)$ be a potential-based flow network. Let~$x\in [0,1]^A$ be given. 
    We call $\mathcal N^x\define (G^x,\beta^x,r)$ the network induced by~$x$,
    where we define the graph $G^x\define (V,A^x)$ with arc set~$A^x \define \set{a\in A \suchthat x_a \neq 0}$.
    For the resistance, we set $\beta^x_a\define \beta_a /x_a$ for $a\in A^x$ and denote the corresponding conductance with~$\mu^x$. 
\end{definition}
Note that by the definition of conductance, we obtain:
\begin{equation}
    \label{eq:Nx_conductance}
    \mu^x_a = \sqrt[r]{x_a}\,\mu_a \quad \text{for }  a\in A^x.
\end{equation}

\begin{remark}
    \label{rem:121-Nx}
    There is a one-to-one correspondence between a feasible solution to~\eqref{problem} and the potential-based flow in $\mathcal N^x$.
    If $(\pi,f,x)$ is feasible for~\eqref{problem}, then $(\pi,f^x)$, with  $f^x_a=f_a$ for $a\in A^x$, is exactly the potential-based flow induced by~$b$ in the network $\mathcal N^x$.    
    Conversely, if $(\pi,f')$ is a potential-based $b$-transhipment in $\mathcal N^x$ with $x\in [0,1]^A$, then $(\pi,f,x)$, with $f_a = f'_a$ for $a\in A^x$ and $f_a =0$ otherwise, is feasible for~\eqref{problem} if and only if $\pi_v\in [0,\bar{\pi}]$. 
\end{remark}

\begin{theorem}
    \label{thm:main_inequality}
    Let $(\mathcal N,r, b,\bar{\pi},c)$ be an instance of~\eqref{problem}.
    If $(\pi,f,x)$ is feasible, then
    \begin{equation}
        \label{b-cut}
        \frac{1}{k\sqrt[r]{k}} \sum_{i=1}^k \sum_{a\in\delta(S_i)} \mu_a\, x_a \geq \frac{b(X)}{\sqrt[r]{\bar{\pi}}}
    \end{equation}
    for all subsets $X\subseteq T $ and all $k$ disjoint $(X\cap T^+,T^-\setminus X)$-cuts $S_1\subseteq \dots \subseteq S_k\subseteq V$.
\end{theorem}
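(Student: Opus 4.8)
The strategy is to combine the two reductions already established: Theorem~\ref{thm:b-transhipment}, which lets us pass from a potential-based $b$-transshipment to a potential-based $(s,t)$-flow for a suitable super-source/super-sink pair, and Theorem~\ref{thm:n-cuts_inequality}, which supplies exactly the inequality~\eqref{eq:st-cut} for $k$ disjoint $(s,t)$-cuts. First I would fix $X\subseteq T$ with $b(X)\ge 0$ (if $b(X)<0$ the inequality is trivial since the left-hand side is nonnegative) and fix $k$ disjoint $(X\cap T^+, T^-\setminus X)$-cuts $S_1\subseteq\dots\subseteq S_k$. By Remark~\ref{rem:121-Nx}, the feasible solution $(\pi,f,x)$ corresponds to a potential-based $b$-transshipment $(\pi,f^x)$ in the induced network $\mathcal N^x=(G^x,\beta^x,r)$ with $\pi\in[0,\bar\pi]^V$.

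Next I would apply Theorem~\ref{thm:b-transhipment} to the network $\mathcal N^x$ with this balance vector $b\in B(T)$ and the chosen set $X$: since $b(X)\ge 0$, there is a potential-based $b'$-transshipment $f'$ in $\mathcal N^x$ with potentials $\pi'\in[0,\bar\pi]^V$ where the sources of $b'$ lie in $T^+\cap X$, the sinks in $T^-\setminus X$, and $\sum_{v\in T^+\cap X} b'_v = b(X)$. Now I would add a super-source $s$ joined to all $v$ with $b'_v>0$ by an arc carrying flow $b'_v$ and a super-sink $t$ joined from all $v$ with $b'_v<0$ by an arc carrying flow $-b'_v$, and contract the set $T^+\cap X$ into $s$ and $T^-\setminus X$ into $t$. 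This produces a potential-based $(s,t)$-flow of value $d\coloneqq b(X)$ in a modified network $\tilde{\mathcal N}$ (with the new arcs assigned resistances consistent with the existing potentials, exactly as in the proof of Theorem~\ref{thm:b-transhipment}), still respecting $\pi\in[0,\bar\pi]$ on the original vertices and with $\pi_s=\bar\pi$, $\pi_t=0$. The cuts $S_1\subseteq\dots\subseteq S_k$ are $(X\cap T^+, T^-\setminus X)$-cuts in $G$, hence become $(s,t)$-cuts in $\tilde G$, and—this is the point that needs care—because the newly added super-source/super-sink arcs are incident only to $s$ and $t$, for each $S_i$ with $X\cap T^+\subseteq S_i\subseteq V\setminus(T^-\setminus X)$ the crossing arcs of $S_i$ in $\tilde G$ are precisely the crossing arcs $\delta_G(S_i)$ in the original graph, so the disjointness is preserved.

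Applying Theorem~\ref{thm:n-cuts_inequality} to $\tilde{\mathcal N}$ with this $(s,t)$-flow of value $d=b(X)$ then gives
\begin{equation*}
\frac{1}{k\sqrt[r]{k}}\sum_{i=1}^k\sum_{a\in\delta(S_i)}\tilde\mu_a \ \geq\ \frac{b(X)}{\sqrt[r]{\bar\pi}},
\end{equation*}
where $\tilde\mu$ is the conductance of $\tilde{\mathcal N}$. On the arcs of the original graph $G$ we have $\tilde\mu_a=\mu^x_a=\sqrt[r]{x_a}\,\mu_a\le\mu_a\,x_a$ when $x_a\in[0,1]$—wait, more precisely $\sqrt[r]{x_a}\ge x_a$ for $x_a\in[0,1]$ and $r\ge 1$, so this bound goes the wrong way. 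The fix: one does not want to bound $\mu^x_a$ by $\mu_a x_a$ for feasible (binary) solutions $x_a\in\{0,1\}$ this is an equality, $\mu^x_a = \mu_a x_a$. So for feasible $(\pi,f,x)$ with $x\in\{0,1\}^A$ we have $\tilde\mu_a=\mu^x_a=\mu_a x_a$ for all $a\in A$ (with $x_a=0$ arcs simply absent from $\delta(S_i)\cap A^x$, contributing $0=\mu_a x_a$), and the super-source/super-sink arcs do not appear in any $\delta(S_i)$, so the displayed inequality is exactly~\eqref{b-cut}. The main obstacle I anticipate is the bookkeeping around the added arcs and the contractions: one must verify carefully that $\delta_{\tilde G}(S_i)$ contains none of the auxiliary arcs and coincides with $\delta_G(S_i)\cap A^x$, and that after contracting $X\cap T^+$ and $T^-\setminus X$ the sets $S_i$ remain a valid nested family of $(s,t)$-cuts; both follow from the hypothesis $X\cap T^+\subseteq S_i\subseteq V\setminus(T^-\setminus X)$, but this needs to be spelled out.
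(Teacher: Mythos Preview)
Your argument is essentially identical to the paper's: pass to $\mathcal N^x$ via \Cref{rem:121-Nx}, invoke \Cref{thm:b-transhipment} to obtain a $b'$-transshipment with sources in $T^+\cap X$ and sinks in $T^-\setminus X$, attach a super-source $s$ and super-sink $t$ with the resistances from the proof of \Cref{thm:b-transhipment} to obtain a potential-based $(s,t)$-flow of value $b(X)$, apply \Cref{thm:n-cuts_inequality} to the nested $(s,t)$-cuts $S_i\cup\{s\}$, and finally use $x_a\in\{0,1\}$ to replace $\sqrt[r]{x_a}$ by $x_a$. The only superfluous step is your additional contraction of $T^+\cap X$ into $s$ and $T^-\setminus X$ into $t$: once the super-source/super-sink arcs are in place this is unnecessary (and taken literally would destroy the potential-based flow, since those terminals need not share a common potential); the paper simply works in the augmented network and observes, as you do, that the auxiliary arcs never cross any $S_i\cup\{s\}$.
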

    
\begin{proof}
    Since all variables are non-negative, the inequality always holds if $b(X)$ is non-positive.
    We therefore assume, without loss of generality, that $b(X)>0$.
    
    Consider the network  $\mathcal N^x$ induced by $x$ from $\mathcal N$ (\Cref{def:Nx}).
    According to \Cref{rem:121-Nx}, $(\pi,f)$ is a potential-based $b$-transshipment in $\mathcal N^x$, satisfying the given pressure bounds if we ignore the flow over arcs outside the support of~$x$.
    By \Cref{thm:b-transhipment}, network $\mathcal N^x$ admits a potential-based $b'$-transhipment with corresponding potentials $\pi'\in [0,\bar{\pi}]^V$ such that all source nodes of $b'$ are contained in $T^+\cap X$ and the sinks contained in $T^-\setminus X$. 
    As shown in the theorem's proof, we can modify the network $\mathcal N^x$ by adding a super-source~$s$ with arcs to all sources of $b'$ and a super-sink node $t$ with arcs connecting all sinks of $b'$ to $t$.
    
    The resistances of the arcs added can be chosen so that the potentials $\pi'$ extend to $V\cup \{s,t\}$.
    This can be done in such a way that they induce a potential-based $(s,t)$-flow of value $b(X)$, while still satisfying the given pressure bounds.

    Note that the arcs crossing the $k$ disjoint $(s,t)$-cuts $S_1\cup \{s\}\subseteq \dots \subseteq S_k\cup \{s\}$ in the modified network are exactly those crossing the $(X\cap T^+,T^-\setminus X)$-cuts  $S_1, \dots, S_k$ in $\mathcal N^x$. 
    Applying \Cref{thm:n-cuts_inequality} together with~\eqref{eq:Nx_conductance} yields:
    \begin{equation}
        \label{eq:nonlinear_cut}
        \frac{1}{k\sqrt[r]{k}}\sum_{i=1}^k \sum_{a\in \delta(S_i)\cap A^x} \mu_a\, \sqrt[r]{x_a} \geq \frac{d}{\sqrt[r]{\bar{\pi}}},
    \end{equation}
    To conclude the proof, note that $\sqrt[r]{x_a}$ can be replaced by $x_a \in \{0,1\}$.
\qed
\end{proof}

Inequality~\eqref{eq:nonlinear_cut} still holds for non-binary $x\in [0,1]^A$.  
However, expression~\eqref{b-cut} is stronger if $r > 1$.

\begin{theorem}
    \label{thm:separation}
    Inequalities of type~\eqref{b-cut} can be separated for given $x\in[0,1]^A$ in polynomial time.
\end{theorem}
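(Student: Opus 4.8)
The plan is as follows. For a fixed number $k$ of cuts and a fixed set $X\subseteq T$, finding a most violated inequality of type~\eqref{b-cut} amounts to minimizing $\sum_{i=1}^k\sum_{a\in\delta(S_i)}\mu_a x_a$ over all $k$ disjoint nested $(X\cap T^+,T^-\setminus X)$-cuts; after contracting $X\cap T^+$ and $T^-\setminus X$ to a single source and sink, this is precisely the $k$ disjoint $(s,t)$-cut problem, solvable in polynomial time by~\cite{Wagner1990}. The only genuine difficulty is that there are exponentially many candidate sets~$X$, so the separation routine cannot simply loop over all of them.

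To deal with this, I would fix $k$ and a disjoint nested family $S_1\subseteq\dots\subseteq S_k$ and determine the best accompanying $X$ in closed form. For the $S_i$ to be valid $(X\cap T^+,T^-\setminus X)$-cuts, one needs exactly $X\cap T^+\subseteq S_1$ and $T^-\cap S_k\subseteq X$; since $b_v\ge0$ for $v\in T^+$ and $b_v\le0$ for $v\in T^-$, the admissible $X$ maximizing $b(X)$ is the one with $X\cap T^+=S_1\cap T^+$ and $X\cap T^-=S_k\cap T^-$, for which $b(X)=b(S_1\cap T^+)+b(S_k\cap T^-)$. Hence the most violated inequality with exactly $k$ cuts is obtained by maximizing
\[
\frac{b(S_1\cap T^+)+b(S_k\cap T^-)}{\sqrt[r]{\bar\pi}}-\frac{1}{k\sqrt[r]{k}}\sum_{i=1}^k\sum_{a\in\delta(S_i)}\mu_a x_a
\]
over all disjoint nested cut families $S_1\subseteq\dots\subseteq S_k$, an optimization that no longer involves~$X$.

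The next step is to solve this optimization by submodular minimization. Encoding a tuple $(S_1,\dots,S_k)$ by the set $U=\{(v,i):v\in S_i\}\subseteq V\times\{1,\dots,k\}$, the two requirements ``$S_1\subseteq\dots\subseteq S_k$'' and ``the sets $\delta(S_i)$ are pairwise disjoint'' turn out to be equivalent to the implications $(v,i)\in U\Rightarrow(v,i+1)\in U$ for all $v\in V$ and $(u,i)\in U\Rightarrow(w,i+1)\in U$ for all arcs $\{u,w\}\in A$, for $1\le i<k$; consequently the set $\mathcal X$ of feasible $U$ is closed under union and intersection. On $\mathcal X$, the negative of the objective above is submodular: each inner sum $\sum_{a\in\delta(S_i)}\mu_a x_a$ is a cut function, hence submodular by~\eqref{eq:submodular}, of a coordinate projection of $U$, while the two terms $b(S_1\cap T^+)$ and $b(S_k\cap T^-)$ are modular and depend only on the coordinates $i=1$ and $i=k$. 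By the result on submodular minimization over families closed under union and intersection recalled in \Cref{sec:Prelim}, this minimum can be computed in polynomial time, after which one reads off the optimal cuts and, via the formulas above, the set~$X$. Running this for $k=1,\dots,\abs V-1$ and reporting a violated inequality iff one of these optimal values is positive finishes the algorithm; here one uses that a disjoint nested family has at most $\abs V-1$ distinct proper cuts, and that if $S_1=\emptyset$ or $S_k=V$ then the best $X$ satisfies $b(X)\le0$, so such families never produce a violated inequality.

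The main obstacle is, as indicated, the exponentially many sets~$X$. It is tempting to model the choice of $X$ by adding a super-source joined to all of $T^+$ and a super-sink joined from all of $T^-$, with suitable arc weights, and then computing a single minimum cut; this works for $k=1$ but breaks for $k\ge2$, because an arc from the super-source to a terminal $v$ is cut once for every $S_i$ not containing~$v$, whereas $v$'s contribution to $b(X)$ depends only on whether $v\in S_1$. Such a model therefore over-penalizes terminals placed at intermediate cut levels and cannot certify the absence of a violated inequality, while the reduction to the best $X$ for given cuts sidesteps this issue, turning the cut term into the slightly more general submodular objective above.
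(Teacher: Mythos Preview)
Your argument is correct and takes a genuinely different route from the paper's proof. Both approaches fix $k$ and reduce to submodular function minimization over a lattice family, but they swap the order of elimination of the two remaining unknowns $X$ and $(S_1,\dots,S_k)$.

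The paper first eliminates the cuts: for fixed $X$ it defines $\sigma_k(X)$ as the minimum of $\frac{1}{k\sqrt[r]{k}}\sum_{i}\sum_{a\in\delta(S_i)}\mu_ax_a$ over all admissible cut families, shows that the domain $\mathcal X_k\subseteq 2^T$ of feasible $X$ is a lattice, that $\sigma_k$ is submodular on it, and that each evaluation of $\sigma_k$ reduces to a $k$ disjoint $(s,t)$-cut LP \`a la Wagner~\cite{Wagner1990}. One then minimizes the submodular function $\sigma_k(X)-b(X)/\sqrt[r]{\bar\pi}$ over $\mathcal X_k$. You instead first eliminate $X$ in closed form---observing that the best compatible $X$ has $X\cap T^+=S_1\cap T^+$ and $X\cap T^-=S_k\cap T^-$---and then minimize directly over the encoding $U\subseteq V\times\{1,\dots,k\}$ of the cut family. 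Your lattice is given explicitly by Horn-type implications, and your objective is a sum of $k$ cut functions (each submodular via a coordinate projection) plus a modular term; no auxiliary LP is needed inside the oracle. What the paper's route buys is a small ground set $T$ for the submodular minimization, at the price of an LP call per evaluation; what your route buys is a trivial evaluation oracle and a transparent lattice description, at the price of a ground set of size $k\,|V|$. Your treatment of the bound $k\le |V|-1$ via the observation that $S_1=\emptyset$ or $S_k=V$ forces $b(X)\le 0$ for the best $X$ is also clean and matches the paper's one-line remark that at most $|V|-1$ disjoint cuts exist.
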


\begin{proof}
    To simplify notation, we say that $S_1, \dots, S_k \in 2^V$ are \emph{disjoint $X$-cuts} for $X \subseteq T$ if $S_1\subseteq \dots \subseteq S_k$ are $k$ disjoint $(X\cap T^+,T^-\setminus X)$-cuts.
     We show that $X\subseteq T$  minimizing the following expression
    \begin{equation}
        \label{expression}
         \frac{1}{k\sqrt[r]{k}} \sum_{i=1}^k \sum_{a\in\delta(S_i)} \mu_a x_a - \frac{b(X)}{\sqrt[r]{\overline{\pi}}}
    \end{equation}
    for all $k$ disjoint $X$-cuts can be found in polynomial time.
    We define the family $\mathcal X _k \define \{X\subseteq T \suchthat \text{there are disjoint $X$-cuts } S_1, \dots, S_k \in 2^V\}$ for every $k \in \N$.
    
    In \Cref{app:claim1}, we show that $\mathcal X_k\subseteq 2^T$ is closed under set union and intersection; see Lemma~\ref{lem:lattice} which is stated and proven there.
    On $\mathcal X_k$ we then define the function $\sigma_k \colon \mathcal X _k \rightarrow \R_{\geq 0}$ which maps $X\in  \mathcal X _k$ to the value
    \begin{equation*}
        \sigma_k(X) \define \min \Bigg\{ \frac{1}{k\sqrt[r]{k}} \sum_{i=1}^k \sum_{a\in\delta(S_i)} \mu_a x_a \suchthat
        \text{disjoint $X$-cuts } S_1, \dots, S_k \subseteq V 
        \Bigg \}.
    \end{equation*}

    \begin{claimnr}
        \label{claim:sigma_k}
        The set function $\sigma_k$ is submodular. 
    \end{claimnr}
    \begin{proof}[of Claim~\ref{claim:sigma_k}]
    \renewcommand\qed{\hfill\rotatebox{90}{$\triangle$}}
        Let $X$, $Y\in \mathcal X_k$, and $S^X_1, \dots, S^X_k $ and  $S^Y_1, \dots , S^Y_k$ the respective minimal, $k$ disjoint $X$-cuts and $Y$-cuts.
        For the submodularity of $\sigma_k$, we first use the definition of $\sigma_k$ and then the submodularity of the cut function, i.e., Inequality~\eqref{eq:submodular}.
        The final step follows from the fact that $S_1^X\cup S_1^Y,\dots ,S_k^X\cup S_k^Y$ are disjoint $X\cup Y$-cuts and $S_1^X\cap S_1^Y,\dots, S_k^X\cap S_k^Y$ disjoint $X\cap Y$-cuts.
        Both statements follow from the proof of Lemma~\ref{lem:lattice} in \Cref{app:claim1}.
        Formally, 
        \begin{align*}
            \sigma_k(X) + \sigma_k(Y) 
             & =  \frac{1}{k\sqrt[r]{k}} \sum_{i=1}^k \bigg( \sum_{a\in\delta(S^X_i)} \mu_a x_a + \sum_{a\in\delta(S^Y_i)} \mu_a x_a \bigg) \\
             &\geq  \frac{1}{k\sqrt[r]{k}} \sum_{i=1}^k \bigg( \sum_{a\in\delta(S^X_i\cup S^Y_i)}  \mu_a x_a + \sum_{a\in\delta(S^X_i\cap S^Y_i)} \mu_a x_a \bigg) \\
             &\geq   \sigma_k(X\cup Y) + \sigma_k(X \cap Y). &\hfill\qed
        \end{align*}
    \end{proof}

    \begin{claimnr}
        \label{claim:sigma_eval_poly}
        The set function $\sigma_k$ can be evaluated in polynomial time. 
    \end{claimnr}
    \begin{proof}[of Claim~\ref{claim:sigma_eval_poly}]
    \renewcommand\qed{\hfill\rotatebox{90}{$\triangle$}}
        We show that we can decide if $X\in \mathcal X_k$ and, if this is the case, also evaluate $\sigma_k(X)$ in polynomial time. 
        We consider the underlying undirected graph $G=(V,E,w)$ of the given network $\mathcal N$ with edge weights given by $w_e=\mu_e x_e\geq 0$.
        We then add nodes $s$ and $t$ and connect $s$ to all nodes in $X\cap T^+$ and $t$ to all nodes in $T^-\setminus X$ where we set the weight of all these edges to infinity.
        We solve the $k$ disjoint $(s,t)$-cut problem by solving an LP formulation stated in~\eqref{min_k_cut} in \Cref{app:claim2} to obtain the value $k\sqrt[r]{k}\, \sigma_k(X)$.
    \qed\end{proof}
    Note that $b$ as a function on $2^T$ is modular.
    This implies together with Claim~\ref{claim:sigma_k} that the function $g_k\colon \mathcal X_k\rightarrow \Rp$, $X\mapsto \sigma_k(X) -b(X)/\sqrt[r]{\bar{\pi}}$ is submodular. 
    Thus, the expression~\eqref{expression} can be minimized by computing the minimum of $g_k$ for every~$k\in \mathbb N$.
    There exist at most $\abs{V}-1$ disjoint $(X\cap T^+,T^-\setminus X)$-cuts.
\qed \end{proof}

In the proof of Claim~\ref{claim:sigma_k}, we show that the functions $\sigma_k$ are submodular for all $k \in \N$. One may wonder whether the function $\sigma \colon 2^T \to \R_{\geq 0}$ defined by
\begin{equation*}
    \sigma(X) \define \min \Bigg\{ \frac{1}{k\sqrt[r]{k}} \sum_{i=1}^k \sum_{a\in\delta(S_i)}  \mu_a x_a  \suchthat
      \text{disjoint $X$-cuts } S_1, \dots, S_k \in 2^V 
    \Bigg \}
\end{equation*}
for all $X \subseteq T$ is submodular on its own. 
The following example shows that this is not the case.
Consider the graph depicted in~\Cref{fig:counterexample_submodular}. 

\tikzstyle{nodeL}=[circle,draw, minimum size=4pt, inner sep = 0]
\tikzstyle{nodeI}=[circle,draw, minimum size=3pt, inner sep = 0, fill]
\tikzstyle{edge}=[]
\begin{figure}
    \centering
    \begin{center}
    \begin{tikzpicture}
        \node[nodeL, label ={$s_1$}] (s1) at (0, 1) {};
        \node[nodeL, label ={$s_2$}] (s2) at (0, 0) [left] {};
        \node[nodeI] (v)  at (2, 1) {};
        \node[nodeL, label = {$t$}] (t)  at (4, 0) [right]{};
        
        \draw[edge] (s1) -- node[above] {$1$} (v); 
        \draw[edge] (v) -- node[above] {$1$}(t) ;  
        \draw[edge] (s2) -- node[above] {$1$} (t); 
    \end{tikzpicture}
\end{center}
    \caption{Graph for which the set function $\sigma$ is not submodular.}
    \label{fig:counterexample_submodular}
\end{figure}
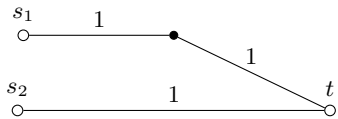

Let the terminal node set be given by $T = \{s_1,s_2,t\}$ with entry node set~$\smash{T^+=\{s_1,s_2\}}$ and exit node~$\smash{T^-=\{t\}}$. 
Assume that for every arc $a\in A$, we have $\mu_a x_a = 1$.
Then,
\begin{equation*}
    \sigma(\{s_1\}) + \sigma(\{s_2\}) = \frac{1}{\sqrt[r]{2}} + 1  < 2 
\end{equation*}
but $\sigma(\{s_1,s_2\}) + \sigma(\emptyset) = 2$, and 
therefore $\sigma(\{s_1\}) + \sigma(\{s_2\}) < \sigma(\{s_1,s_2\}) + \sigma(\emptyset)$ which shows that $\sigma$ is not submodular.


\section{Computational Results}
\label{sec:ComputationalResults}

We performed computational experiments with the derived inequalities on gas networks with $r = 2$, where we concentrate on the single source, single sink case.
The code was implemented in C and uses SCIP 9.1.1~\cite{SCIP9}, which performs linear programming based spatial branch-and-bound.
We use CPLEX 12.10 to solve the LPs and IPOPT 3.14.17~\cite{WaeB06} for the NLPs. We include the acyclicity constraints from~\cite{HabP21}.
The computations were obtained single-threaded in exclusive mode on a Linux cluster with 
Intel Xeon E5-1620 CPUs running at 3.5 Ghz with 32 GB memory.
Our results are shown in Table \ref{tab2}; TL refers to reaching the time limit of 8 hours.

We tested the impact of the inequality \eqref{b-cut} on several instances with different parameters for the maximum pressure and the cut depth with the $(s,t)$-balance $d$ being fixed. 
We created path instances \textsc{Path27} and \textsc{Path52} consisting of 27 and 52 nodes respectively.
Between each pair of nodes there are three different pipe options to be built. 
Furthermore, we introduced a super source and sink to \textsc{GasLib40} and \textsc{GasLib134} to use them for our tests.
Moreover, \textsc{GasLib40large} contains more diameter options, whereas for \textsc{GasLib40} only the original arcs can be built.
We considered the GasLib instances as passive instances by deleting active elements, like valves and switches, and by considering compressors as normal pipes. 
The costs $c\in\mathbb R_{>0}^A$ depend on the diameter and length of the arc and are computed according to Reuß et al. \cite{Reuss2019}. 

The results align with the structure the inequalities, meaning that \eqref{b-cut} is especially strong if the length $k$ of the disjoint cuts and the balance $d$ are high as well as the maximum pressure $\bar{\pi}$ is low.
We found that the maximum possible value for~$k$ performed best on path instances, which is why we do not include results with lower $k$.
On the other instances, usually a higher $k$ performed better. 
For \textsc{Path27} and \textsc{Path52}  with the tested settings, the inequality describes the integral hull resulting in the instances being solved at the root node.
Overall, we conclude that the addition of the inequalities boosts performance. It is especially helpful on the large \textsc{GasLib134} and \textsc{Path52}, where the gap is huge without it. Note that for \textsc{GasLib40large} the version with the inequalities performs worse because an optimal solution is found very late.
In future experiments, we will investigate the behavior of primal heuristics and adapting $k$ to the instance.

\begin{table}[tb]
\scriptsize
\centering
\caption{Computational results with \eqref{b-cut} for the $(s,t)$-case}\label{tab2}
\vspace{-1.5ex}
\begin{tabular}{@{}l@{\quad}r@{\quad}r@{\quad}r@{\quad}r@{\quad}r@{\quad}r@{}}
\toprule
instance & \eqref{b-cut} & $\bar{\pi}$ &time (s)& \# nodes & gap \% & $k$\\
\midrule
\textsc{Path27} & yes &70 & 0.7 & 1 & 0 & 26\\
\textsc{Path27} & no &70 & 21789 & 2056431 & 0 & --\\
\textsc{Path27} & yes &100 & TL & 1321823 & 8.6 & 26\\
\textsc{Path27} & no &100 & TL & 837490 & 40.9 & --\\
\textsc{Path27} & yes &200 & 6758 & 1701253 & 0 & 26\\
\textsc{Path27} & no &200 & TL & 1700438 & 0.45 & --\\
\textsc{Path52} & yes &98 & 0.1 & 1 & 0 & 51\\
\textsc{Path52} & no &98 & TL & 539108 & 94 & --\\
\textsc{GasLib40} & yes &100 & 18 & 453 & 0 & 12\\
\textsc{GasLib40} & no &100 & 17 & 854 & 0 & --\\
\textsc{GasLib40large} & yes &100 & 19735 & 2058996 & 0 & 12\\
\textsc{GasLib40large} & no &100 & 280 & 8484 & 0 & --\\
\textsc{GasLib134} & yes &70 & TL & 469862 & 41.5 & 40\\
\textsc{GasLib134} & no &70 & TL & 386046 & 114.7 & --\\
\bottomrule
\end{tabular}
\end{table}

   \begin{credits}
       \subsubsection{\ackname}
       This work is funded by the DFG via the SFB TRR 154 subprojects A01, A007, and A09.
       \subsubsection{\discintname}
       The authors have no competing interests to declare that are relevant to the content of this article.
   \end{credits}
   
    \newpage
    \bibliographystyle{splncs04}
    \bibliography{literature}

    \newpage

\appendix

\section{Appendix}

Recall that $S_1, \dots, S_k \in 2^V$ are \emph{disjoint $X$-cuts} for $X \subseteq T$ if $S_1\subseteq \dots \subseteq S_k$ induce $k$ disjoint $(X\cap T^+,T^-\setminus X)$-cuts.

\subsection{Details for the proof of Claim~\ref{claim:sigma_k} in the proof of \Cref{thm:separation}}
\label{app:claim1}

\begin{lemma}\label{lem:lattice}
        Let $k \in \N$. Then,
    \begin{equation*}
        \mathcal X _k \coloneqq \left \{X\subseteq T  \suchthat 
             \text{there are disjoint $X$-cuts } S_1, \dots, S_k \in 2^V 
            \right \}
    \end{equation*}
        is closed under set union and set intersection.
\end{lemma}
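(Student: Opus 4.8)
The plan is to show directly that if $X, Y \in \mathcal X_k$, then $X \cup Y \in \mathcal X_k$ and $X \cap Y \in \mathcal X_k$, by exhibiting explicit witnessing chains of disjoint cuts. Suppose $S_1^X \subseteq \dots \subseteq S_k^X$ are disjoint $X$-cuts and $S_1^Y \subseteq \dots \subseteq S_k^Y$ are disjoint $Y$-cuts. The natural candidates are the ``coordinatewise'' unions $S_i^X \cup S_i^Y$ and intersections $S_i^X \cap S_i^Y$ for $i = 1, \dots, k$. I would first check the easy structural properties: since $S_i^X \subseteq S_{i+1}^X$ and $S_i^Y \subseteq S_{i+1}^Y$, the chains $S_1^X \cup S_1^Y \subseteq \dots \subseteq S_k^X \cup S_k^Y$ and $S_1^X \cap S_1^Y \subseteq \dots \subseteq S_k^X \cap S_k^Y$ are again nested. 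Next, I would verify the terminal-separation conditions: a node in $(X \cup Y) \cap T^+$ lies in $X \cap T^+ \subseteq S_i^X$ or in $Y \cap T^+ \subseteq S_i^Y$, hence in $S_i^X \cup S_i^Y$; and a node in $T^- \setminus (X \cup Y) = (T^- \setminus X) \cap (T^- \setminus Y)$ lies outside both $S_i^X$ and $S_i^Y$, hence outside their union. The analogous check works for $X \cap Y$ with the roles of union and intersection swapped: a node in $(X \cap Y) \cap T^+$ lies in both $S_i^X$ and $S_i^Y$, and a node in $T^- \setminus (X \cap Y) = (T^- \setminus X) \cup (T^- \setminus Y)$ lies outside at least one of them, hence outside the intersection. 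So $S_1^X \cup S_1^Y, \dots, S_k^X \cup S_k^Y$ satisfy the $(X \cup Y) \cap T^+$ / $T^- \setminus (X \cup Y)$ separation, and $S_1^X \cap S_1^Y, \dots$ the corresponding one for $X \cap Y$.

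The main obstacle is the disjointness of the crossing arc sets: I must show $\delta(S_i^X \cup S_i^Y) \cap \delta(S_j^X \cup S_j^Y) = \emptyset$ for $i \neq j$, and likewise for the intersections. The key combinatorial fact to establish is the edge inclusion
\[
\delta(S_i^X \cup S_i^Y) \cup \delta(S_i^X \cap S_i^Y) \subseteq \delta(S_i^X) \cup \delta(S_i^Y),
\]
which holds because any arc crossing $S_i^X \cup S_i^Y$ (or $S_i^X \cap S_i^Y$) must have its two endpoints separated by at least one of $S_i^X$ or $S_i^Y$ — this is the edge-set version of submodularity of the cut function and is a routine case analysis on which of the four ``quadrants'' $S_i^X \cap S_i^Y$, $S_i^X \setminus S_i^Y$, $S_i^Y \setminus S_i^X$, $V \setminus (S_i^X \cup S_i^Y)$ the endpoints fall into. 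Granting this, for $i \neq j$ any arc in $\delta(S_i^X \cup S_i^Y) \cap \delta(S_j^X \cup S_j^Y)$ would lie in $(\delta(S_i^X) \cup \delta(S_i^Y)) \cap (\delta(S_j^X) \cup \delta(S_j^Y))$; expanding this intersection into four terms, each term is empty — $\delta(S_i^X) \cap \delta(S_j^X) = \emptyset$ and $\delta(S_i^Y) \cap \delta(S_j^Y) = \emptyset$ by assumption, while $\delta(S_i^X) \cap \delta(S_j^Y)$ also vanishes, which I would argue using nestedness: assume $i < j$, so $S_i^X \subseteq S_j^X$ and $S_i^Y \subseteq S_j^Y$; an arc in $\delta(S_i^X) \cap \delta(S_j^Y)$ together with the inclusion above (applied at index $\max$ appropriately) gets pushed into one of the already-disjoint families. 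Actually the cleanest route is: the stronger statement that $\{S_i^X \cup S_i^Y\}_i$ are disjoint follows once we know each $\delta(S_i^X \cup S_i^Y) \subseteq \delta(S_i^X) \cup \delta(S_i^Y)$, because then $\bigcup_i \delta(S_i^X \cup S_i^Y) \subseteq \bigcup_i \delta(S_i^X) \cup \bigcup_i \delta(S_i^Y)$ with the latter two unions being internally disjoint, and a counting/indicator argument (each arc of $\bigcup_i\delta(S_i^X)$ appears in exactly one $\delta(S_i^X)$) forces the $\delta(S_i^X\cup S_i^Y)$ to be pairwise disjoint as well — here one uses that if an arc lay in both $\delta(S_i^X\cup S_i^Y)$ and $\delta(S_j^X\cup S_j^Y)$ it would have to come from $\delta(S_i^X)$ or $\delta(S_i^Y)$ and from $\delta(S_j^X)$ or $\delta(S_j^Y)$, and all four cross-combinations are excluded (the same-letter ones by hypothesis, the mixed ones by the nestedness argument above).

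The intersection case is entirely symmetric, using $\delta(S_i^X \cap S_i^Y) \subseteq \delta(S_i^X) \cup \delta(S_i^Y)$ from the same edge inclusion. I would close by noting that this lemma also delivers exactly what Claim~\ref{claim:sigma_k} cites: the families $\{S_i^X \cup S_i^Y\}_i$ and $\{S_i^X \cap S_i^Y\}_i$ are disjoint $(X\cup Y)$-cuts and $(X\cap Y)$-cuts respectively, and the edge inclusion above is precisely the per-index estimate
\[
\textstyle\sum_{a\in\delta(S_i^X\cup S_i^Y)}\mu_a x_a + \sum_{a\in\delta(S_i^X\cap S_i^Y)}\mu_a x_a \le \sum_{a\in\delta(S_i^X)}\mu_a x_a + \sum_{a\in\delta(S_i^Y)}\mu_a x_a
\]
used there. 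I expect the whole argument to be short once the edge-inclusion case analysis is written out carefully; that case analysis is the only place where any real work happens.
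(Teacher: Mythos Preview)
Your overall strategy matches the paper's: take the coordinatewise unions $S_i^X\cup S_i^Y$ and intersections $S_i^X\cap S_i^Y$ as the candidate chains, check nestedness and the terminal-separation conditions (all correct), and then argue disjointness of the crossing arc sets. The gap is in this last step. You assert that the mixed cross-terms $\delta(S_i^X)\cap\delta(S_j^Y)$ vanish for $i\neq j$, but this is simply false. On the path $V=\{1,2,3,4\}$ with arcs $(1,2),(2,3),(3,4)$, $T^+=\{1\}$, $T^-=\{4\}$, $k=2$, take $S_1^X=\{1\}$, $S_2^X=\{1,2\}$ and $S_1^Y=\{1,2\}$, $S_2^Y=\{1,2,3\}$; then $(2,3)\in\delta(S_2^X)\cap\delta(S_1^Y)$. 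Your ``counting/indicator'' fallback does not escape this: an arc could in principle lie in $\delta(S_i^X\cup S_i^Y)$ via $\delta(S_i^X)$ and in $\delta(S_j^X\cup S_j^Y)$ via $\delta(S_j^Y)$, and nothing in the bare inclusion $\delta(A\cup B)\subseteq\delta(A)\cup\delta(B)$ rules that out.

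The repair is the direct element-chase the paper uses, which exploits more than the inclusion. For $i<j$ and $a=(u,v)\in\delta(S_i^X\cup S_i^Y)$ with $u$ inside and $v$ outside, pick $Z\in\{X,Y\}$ with $u\in S_i^Z$ (so $v\notin S_i^Z$); then $a\in\delta(S_i^Z)$, hence $a\notin\delta(S_j^Z)$ by hypothesis, and since $u\in S_i^Z\subseteq S_j^Z$ this forces $v\in S_j^Z\subseteq S_j^X\cup S_j^Y$, so $a\notin\delta(S_j^X\cup S_j^Y)$. The intersection case needs both letters: from $u\in S_i^X\cap S_i^Y$ one shows $v\in S_j^Z$ for each $Z\in\{X,Y\}$ separately (if $v\notin S_i^Z$ use disjointness of $\delta(S_i^Z)$ and $\delta(S_j^Z)$; if $v\in S_i^Z$ use $S_i^Z\subseteq S_j^Z$), giving $v\in S_j^X\cap S_j^Y$. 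Your edge inclusion $\delta(A\cup B)\cup\delta(A\cap B)\subseteq\delta(A)\cup\delta(B)$ is correct and is precisely what Claim~\ref{claim:sigma_k} consumes, but by itself it is too weak to deliver disjointness of the new families.
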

\begin{proof}
    Let $X$, $Y \in \mathcal X_k$. 
    To show that $ X \cup Y \in \mathcal X_k$ and that $ X \cap Y \in \mathcal X_k$, 
    we prove the existence of $k$ disjoint $X\cup Y$-cuts, as well as $k$ disjoint $X\cap Y$-cuts.
    
    Since $X$, $Y \in \mathcal X_k$ there exist sets $S^X_1, \dots, S^X_k \subseteq V$ which induce $k$ disjoint cuts with respect to $X$ and sets $S^Y_1, \dots, S^Y_k \subseteq V$  for $Y$. 
    We claim that the sets
    \begin{equation*}
        S^X_1\cup S^Y_1 \subseteq \dots \subseteq S^X_k\cup S^Y_k \subseteq V 
        \quad \text{and} \quad 
        S^X_1\cap S^Y_1 \subseteq \dots \subseteq S^X_k\cap S^Y_k \subseteq V
    \end{equation*}
    induce the required disjoint $X\cup Y$-cuts and $X\cap Y$-cuts, respectively.

    Clearly, $(X\cup Y)\cap T^+  \subseteq (S^X_1\cup S^Y_1)$ and $(X\cap Y)\cap T^+  \subseteq (S^X_1\cap S^Y_1)$ because $X\cap T^+  \subseteq S^X_1$ and $Y \cap T^+  \subseteq S^Y_1$.
    On the other hand, since $(T^-\setminus X)\subseteq (V\setminus S^X_k)$ and $(T^-\setminus Y)\subseteq (V\setminus S^Y_k)$, it holds that
    \begin{equation*}
        T^-\setminus (X \cup Y) = (T^-\setminus X )\cap (T^-\setminus Y ) \subseteq (V\setminus S^X_k) \cap (V\setminus S^Y_k) = V\setminus (S^X_k \cup S^Y_k).
    \end{equation*}
    Similarly, it follows that $(T^-\setminus (X \cap Y)) \subseteq (V\setminus (S^X_k \cap S^Y_k))$.
    This proves that the sets indeed induce cuts with respect to $X\cup Y$ and  $X\cap Y$, respectively.

    To show that the induced cuts are disjoint, let $a=(u,v) \in \delta(S^X_i\cup S^Y_i)$.
    We prove that $a\notin \delta(S^X_j\cup S^Y_j)$ for $j\neq i$.
     Assume without loss of generality that $i<j$, $u\in S^X_i \cup S^Y_i $, and $v\notin S^X_i \cup S^Y_i $.
    Since $S^X_i \subseteq S^X_j $ and $S^Y_i \subseteq S^Y_j$ it follows that $u \in S^X_j \cup S^Y_j$.
    Given that $\delta(S^X_i) \cap \delta(S^X_j) = \emptyset$ and $\delta(S^Y_i) \cap \delta(S^Y_j) = \emptyset$, it must hold that 
    $v\in S^X_j$ or $v\in S^Y_j$, or equivalently $v\in S^X_j \cup S^Y_j$.
    Thus, $a\notin \delta(S^X_j \cup S^Y_j)$.
    
    For the intersection, let $a=(u,v) \in \delta(S^X_i\cap S^Y_i)$ and assume without loss of generality that $i<j$, $u\in S^X_i \cap S^Y_i $, and $v\notin S^X_i \cap S^Y_i $.
    Again, since $i<j$ we have $u \in S^X_j \cap S^Y_j$.
    Using the fact that $\delta(S^X_i)\cap \delta(S^X_j)=\emptyset$ and $\delta(S^Y_i)\cap \delta(S^Y_j) = \emptyset$, we deduct that $v\in S^Y_j$ and $v\in S^Y_j$, or simply $v\in S^X_j \cap S^Y_j$.
    Consequently,  $a\notin \delta(S^X_j \cap S^Y_j)$.
\qed \end{proof}

\subsection{Details for the proof of Claim~\ref{claim:sigma_eval_poly} in the proof of \Cref{thm:separation}}
\label{app:claim2}

We consider an undirected graph $G=(V,E,w)$ with non-negative edge weights $w\in\Rp^E$.
The $k$ \emph{disjoint $(s,t)$-cut problem} finds $(s,t)$-cuts $S_1,\dots,S_k\in 2^V$ with $\delta(S_i)\cap \delta(S_j)=\emptyset$
for $i\neq j$ minimizing the total weight of the edges crossing the cuts, i.e., $\sum_{i=1}^k\sum_{e\in\delta(S_i) }w_e.$

This problem is solved in~\cite{Wagner1990} in a more general setting.
Let $D=(V,A,w)$ be a directed graph with non-negative arc weights $w\in\Rp^A$. 
An $(s,t)$-cut $S$ is an $(s,t)$-\emph{dicut} if the arcs crossing the cut are only forward arcs, i.e., $\delta(S)= \delta^+(S)$.
The $k$-\emph{dicut} problem is the computation of $k$ disjoint $(s,t)$-dicuts minimizing the total weight over all such sets.
It is shown in~\cite{Wagner1990} that the $k$-dicut problem contains the
$k$ disjoint $(s,t)$-cut problem.
The idea is to insert for every edge $e=(u,u') \in E$ two extra nodes $v_1$ and $v_2$ and replace $e$ with four arcs $a_1=(u,v_1)$, $a_2=(u',v_2)$ of weight $w_e$ and $a_3=(u',v_1)$, $a_4=(u,v_2)$ of zero weight.

We therefore describe in the following, how the $k$-dicut problem can be solved for an directed graph $D=(V,A,w)$ with $s$, $t\in V$ and $s\neq t$.
First note that instead of finding the cuts $S_1,\dots,S_k$, one can equivalently find the disjoint arc sets $\delta(S_1),\dots,\delta(S_k)$ minimizing the total weight. 
Assume without loss of generality that there exists a directed $(s,t)$-path $P$ of length $k$ each arc having zero weight in $D$.
Let $n$ be the number of vertices and $m$ the number of arcs in $D$ and let $M\in \R^{(m-n+1)\times m}$ be a fundamental cut matrix of $D$.
Consider the linear program
\begin{equation}
    \begin{aligned}
        \min_{y\in \R^A}       & \quad&& w^ty &  &&& \\
        \text{s.t.} &      && My =0, &  &&\\
                    &      && 0\leq y_a \leq 1 & \quad &&&a \in A\setminus P,\\
                    &      &&  y_a  = 1 & \quad &&& a\in P.\\
    \end{aligned}
    \label{min_k_cut}
\end{equation}
Since $M$ is totally unimodular, it follows that every extreme point solution $y$ to~\eqref{min_k_cut} fulfills $y_a\in\{0,1\}$ for every arc $a\in A$.
An optimal solution $y\in \{0,1\}^A$ indicates with $y_a=1$ if an arc $a\in A$ is an arc crossing one of the $k$-dicuts or not, i.e., $y_a=0$.
It is shown in~\cite{Wagner1990} how the vector $y$ can be transformed into the arc sets  $\delta(S_1),\dots,\delta(S_k)$.
For our application, that is to evaluate $\sigma_k(X)$ for $X\subseteq T$, it suffices to compute the optimal value $w^ty$ of~\eqref{min_k_cut}.

\end{document}